\definecolor{my-linkcolor}{rgb}{0.75,0,0}
\definecolor{my-citecolor}{rgb}{0.1,0.57,0}
\definecolor{my-urlcolor}{rgb}{0,0,0.75}
\title{Coset Vertex Algebras}
\author{Thomas Creutzig, Davide Gaiotto and Andrew R. Linshaw}
\date{}
\numberwithin{equation}{section}
\theoremstyle{definition}\newtheorem{rema}{Remark}[section]
\theoremstyle{plain}
\newtheorem{theo}[rema]{Theorem}
\theoremstyle{definition}
\theoremstyle{plain}\newtheorem{lemma}[rema]{Lemma}
\newtheorem{corol}[rema]{Corollary}
\theoremstyle{definition}
\theoremstyle{definition}\newtheorem{ques}[rema]{Question}
\theoremstyle{definition}
\theoremstyle{definition}
\newcommand{\cI}{\mathcal{I}}
\newcommand{\cC}{\mathcal{C}}
\newcommand{\ZZ}{\mathbb{Z}}
\DeclareMathOperator{\ch}{ch}
\newcommand{\VOA}{{vertex operator algebra}}
\newcommand{\VOSA}{{vertex operator super-algebra}}
\newcommand{\VOAs}{{vertex operator algebras}}
\newcommand{\VOSAs}{{vertex operator super-algebras}}
\newcommand{\largeVir}[2]{\text{sVir}_{lrg\ N=4}^{\left(#1, #2\right)}}
\newcommand{\smallVir}[1]{\text{sVir}_{sm\ N=4}^{\left(#1\right)}}
\NewDocumentCommand\render{sg}{%
	\IfBooleanTF#1%
	{#2}  
	{}    
}
\newcommand{\fg}{\mathfrak{g}}
\newcommand{\fsu}{\mathfrak{su}}
\newcommand{\fsl}{\mathfrak{sl}}
\newcommand{\fosp}{\mathfrak{osp}}
\newcommand{\bZ}{\mathbb{Z}}
\newcommand{\Vir}{\mathrm{Vir}}
\newcommand{\sVir}{\mathrm{sVir}}
\begin{document}
	
\title{S-duality for the large $N=4$ superconformal algebra}	
\date{}
\maketitle
\abstract{We prove some conjectures about vertex algebras which emerge in gauge theory constructions associated to the geometric Langlands program. In particular, we present the conjectural kernel vertex algebra for the $S T^2 S$ duality transformation in $SU(2)$ gauge theory.
We find a surprising coincidence, which gives a powerful hint about the nature of the corresponding duality wall. 

Concretely, we determine the branching rules for the small $N=4$ superconformal algebra at central charge $-9$ as well as for the generic large $N=4$ superconformal algebra at central charge $-6$. Moreover we obtain the affine vertex superalgebra of $\mathfrak{osp}(1|2)$ and the $N=1$ superconformal algebra times a free fermion as Quantum Hamiltonian reductions of the  large $N=4$ superconformal algebras at $c=-6$. 
}

\setcounter{tocdepth}{2}
\setcounter{secnumdepth}{4}


\allowdisplaybreaks

\section{Introduction}

Certain deformable families of vertex algebras and their module categories appear in gauge theory constructions \cite{CG, GR} involving the GL twist of 
four-dimensional gauge theory, with applications to problems in the quantum geometric Langlands program \cite{KWi, Gai1, Gai2}. 
The objective of this work is to prove some of the vertex algebra predictions associated to dualities in $SU(2)$ gauge theory.  

\subsection{VOA predictions from gauge theory}\label{sec:pred}

The vertex algebra constructions which follow from gauge theory can be described without reference to the original 
gauge theory motivations. We will do so here and postpone the gauge theory interpretation to a later subsection. 

The predictions are most clear for simply-laced Lie algebras. So let $\fg$ be a simply-laced Lie algebra and $\kappa$ a non-rational complex number. 
Our basic building blocks will be vertex algebras associated to $\fg$ at (critically shifted) levels which are related to $\kappa$ by
$PSL_2(\bZ)$ fractional linear transformations. These include in particular the universal affine vertex algebra $\fg_{\kappa'} \equiv V_{k'}(\fg)$ 
of $\fg$ at level $k'=\kappa'-h^\vee$. Here $h^\vee$ denotes the dual Coxeter number of $\fg$. The other building blocks are ${\mathcal W}^\rho_{\kappa'}(\fg) \equiv W_{k'}^\rho(\fg)$, the quantum Hamiltonian reduction of  $\fg_{\kappa'}$ for the embedding $\rho: \mathfrak{sl}_2 \rightarrow \fg$ of $\mathfrak{sl}_2$ in $\fg$. 
Sometimes, auxiliary rational vertex algebras also appear, such as the algebra $F(n)$ of $n$ real free fermions or the WZW vertex algebras $L_k(\fg)$, 
i.e. the simple quotients of Kac-Moody algebras at integral level. 

An important set of predictions is the existence of a collection of ``kernel vertex algebras'' $A^n[\fg, \kappa]$ which are built 
as extensions of $\fg_\kappa \times \fg_{\frac{\kappa}{n \kappa -1}}$ in the following manner.\footnote{More intricate vertex algebras $A^{(n_i)}[\fg, \kappa]$ can be built as extensions of longer chains of vertex algebras of the form 
$\fg_\kappa \otimes \left( \bigotimes\limits_{i=1}^m {\mathcal W}_{\kappa_i}(\fg)\right) \otimes \fg_{\kappa_{m+1}}$, associated to a chain of levels satisfying
$\kappa_i + \kappa_{i+1}^{-1} =n_i$
with $\kappa_0 = \kappa^{-1}$. Here ${\mathcal W}_{\kappa}(\fg)$ denotes ${\mathcal W}^\rho_{\kappa'}(\fg)$ for regular embedding $\rho$. 
We will not discuss this larger family of vertex algebras here. 
}
Let $P^+$ be the set of dominant weights of $\fg$ and $Q$ its root lattice. For $\lambda \in P^+$ denote by $V_k(\lambda)$ the Weyl module of $V_k(\fg)$ at level $k$. Let $k, k'$ be related to $\kappa, \kappa'$ by $\kappa=k+h^\vee$ and $\kappa'=k'+h^\vee$. 
Then \cite{CG} conjectures 
that for $n$ a positive integer and $\kappa, \kappa'$ such that
\[
\frac{1}{\kappa} + \frac{1}{\kappa'} =n
\]
the module
\[
A^n[\fg, \kappa] = \bigoplus_{\lambda \in P^+\cap Q} V_k(\lambda) \otimes V_{k'}(\lambda)
\]
can be given the structure of a simple vertex algebra for generic $\kappa$. 

There are several more families of vertex algebra with a related conjectural construction:
\begin{itemize}
\item One can now consider an embedding $\rho: \mathfrak{sl}_2 \rightarrow \fg$ and replace one factor by the corresponding modules of $W_{k}^\rho(g)$ obtained via 
quantum Hamiltonian reduction from Weyl modules. For example, we can define
\[
H_{DS}^{R, \rho} \left(A^n[\fg, \kappa]\right) := \bigoplus_{\lambda \in P^+\cap Q} V_k(\lambda) \otimes H_{DS}^\rho\left(V_{k'}(\lambda)\right).
\]
Here $R$ stands for the reduction on the right factor.
Again, the expectation from gauge theory is that this is a simple vertex algebra for generic $\kappa$. Moreover in some instances this algebra is expected to be isomorphic to a simpler vertex algebra. For example, if we take $n=1$ and $\rho$ the regular embedding of $\mathfrak{sl}_2$ in $\fg$, then main Theorem 3 (2) of \cite{ACL1} implies that $H_{DS}^{R, \rho_{\text{reg}}} \left(A^n[\fg, \kappa]\right) \cong V_{k-1}(\fg) \otimes L_1(\fg)$ as modules for $V_k(\fg) \otimes W_{k'}^{\rho_{\text{reg}}}(\fg)$-modules. Here $ L_1(\fg)$ denotes the simple affine vertex algebra of $\fg$ at level one. 

\item Similarly we can define the quantum Hamiltonian reduction on the left factor $H_{DS}^{L, \rho}$ or on both $H_{DS}^{\rho, \rho'}$, e.g. 
\[
H_{DS}^{\rho, \rho'} \left(A^n[\fg, \kappa]\right) := \bigoplus_{\lambda \in P^+\cap Q} H_{DS}^\rho\left(V_{k}(\lambda)\right) \otimes H_{DS}^{\rho'}\left(V_{k'}(\lambda)\right).
\]

\item The $A^n[\fg, \kappa]$ vertex algebras are expected to have a nice large $\kappa$ limit. The expectation is for example that for $\kappa\rightarrow \infty$ one has as $G\times V_{k'}(\fg)$-modules
\[
\lim_{\kappa\rightarrow \infty} A^n[\fg, \kappa] \cong Z\otimes \bigoplus_{\lambda \in P^+\cap Q} \rho_\lambda \otimes V_{k'}(\lambda),
\]
where $Z$ is a commutative vertex algebra and $\rho_\lambda$ is the irreducible highest-weight representation of $\fg$ (or equivalently the Lie group $G$ of $\fg$) of highest-weight $\lambda$. In other words, one expects that affine vertex algebra action gets traded for an action of $G$ in the large coupling limit. 

These algebras were identified explicitly in \cite{CG} for $\fg=\fsl_2$ and $n=1$. Here we will identify 
them for $\fg=\fsl_2$ and $n=2$. 
\end{itemize}

\subsection{Results}

Among finite dimensional simple Lie superalgebras there is one continuous exceptional family called $d(2, 1;a)$, parameterized by the complex number $a$. The large $N=4$ superconformal algebra is the quantum Hamiltonian reduction of the affine vertex operator superalgebra of $d(2, 1;a)$ at level $k_d$ with respect to a minimal nilpotent element \cite{KW}. There is thus a two parameter family of such $W$-superalgebras whose simple quotient we denote by $\largeVir{k_d}{a}$. Furthermore, the large $N=4$ superconformal algebra has a good $a \to \infty$ limit, which contains a large nontrivial ideal; the quotient by this ideal is the small $N=4$ superconformal algebra $\smallVir{k_d}$.

Recent work \cite{AKFPP, AKFPP2} on conformal embeddings of affine vertex operator algebras in minimal W-superalgebras shows that $\largeVir{1/2}{a}$ is a vertex algebra extension of $V_{-(a+3)/2}(\mathfrak{sl}_2) \otimes V_{-(a^{-1}+3)/2}(\mathfrak{sl}_2)$. This is precisely the choice of levels which appear in 
$A^2[\fsl_2, \kappa= \frac{1-a}{2}]$! This motivates us to study the branching rules of this embedding, i.e. the decomposition of $\largeVir{1/2}{a}$ in terms of $V_{-(a+3)/2}(\mathfrak{sl}_2) \otimes V_{-(a^{-1}+3)/2}(\mathfrak{sl}_2)$-modules. For the same reason, we also study the corresponding decomposition of $\smallVir{1/2}$.

We denote as before by $V_k(\lambda)$ the Weyl-module of $V_{k}(\mathfrak{sl}_2)$ of highest-weight $\lambda$, $\rho_{\lambda}$ the corresponding irreducible highest-weight representation of $SU(2)$  and the fundamental weight of $\mathfrak{sl}_2$ is denoted by $\omega$.  Our first main result is Theorem \ref{thm:smallN4}. It says the following: As a module for $SU(2) \times V_{-3/2}(\mathfrak{sl}_2)$ the simple small $N=4$ superconformal algebra at central charge $-9$ decomposes as 
\[
\smallVir{1/2} \cong \bigoplus_{m=0}^\infty \ \rho_{m\omega} \ \otimes \ V_{-3/2}(m\omega).
\]
 The proof of this decomposition result uses Adamovi\'c's construction of this algebra \cite{Ad1}
as well as conformal embeddings found in \cite{C}.

Analogously, one can now use the theory of deformable families of vertex algebras as introduced in \cite{CL, CL2} to study the decomposition of $\largeVir{1/2}{a}$. For this let $a$ be generic.
Our second main result is then Corollary \ref{cor:larN=4} saying that
as a $V_{-(a+3)/2}(\mathfrak{sl}_2) \otimes V_{-(a^{-1}+3)/2}(\mathfrak{sl}_2)$-module
\[
\largeVir{1/2}{a} \cong \bigoplus_{m=0}^\infty V_{-(a+3)/2}(m\omega) \otimes V_{-(a^{-1}+3)/2}(m\omega).
\]
The quantum Hamiltonian reduction of $V_{k}(\mathfrak{sl}_2)$ is the Virasoro vertex algebra at central charge $13-6(k+2)-6(k+2)^{-1}$. We consider the case $k=-(a^{-1}+3)/2$ and apply the quantum Hamiltonian reduction functor to $\largeVir{1/2}{a}$. Our third result is Theorem \ref{thm:red} stating that this reduction is isomorphic to $V_{\ell}(\mathfrak{osp}(1|2))$ ($\ell=-(a+3)/2$)  and the reduction on both affine vertex subalgebras is isomorphic to the $N=1$ superconformal algebra at central charge $3/2 + 3(a+2+a^{-1})$ times a free fermion \VOSA. 

Let us summarize our results in terms of the language of Section \ref{sec:pred}. Let $\kappa$ be generic.
\begin{enumerate}
\item $$A^2[\mathfrak{sl}_2, \kappa] \cong \left(\largeVir{1/2}{a} \right)_{\text{even}} \qquad  \text{with} \qquad \kappa= (1-a)/2.$$
\item $$H_{DS}^{R, \rho_{\text{reg}}} \left(\largeVir{1/2}{a} \right) \cong V_{\ell}(\mathfrak{osp}(1|2))\qquad  \text{with} \qquad \ell=-(a+3)/2.$$
\item $$H_{DS}^{\rho_{\text{reg}}, \rho_{\text{reg}}} \left(\largeVir{1/2}{a} \right) \cong  \sVir_{N=1,c} \otimes F(1)$$ with $\sVir_{N=1,c}$ the $N=1$ superconformal algebra at central charge $c=3/2 + 3(a+2+a^{-1})$ and $F(1)$ a free fermion \VOSA{} of rank one. 
\item We have an isomorphism of $SU(2) \times V_{-3/2}(\mathfrak{sl}_2)$-modules $$\lim_{a\rightarrow\infty} \largeVir{1/2}{a} \cong Z\otimes \smallVir{1/2} \cong Z\otimes \bigoplus_{m=0}^\infty \ \rho_{m\omega} \ \otimes \ V_{-3/2}(m\omega)$$ with $Z$ a commutative Heisenberg vertex algebra of rank three. 
\end{enumerate}

\subsection{Gauge theory motivation}
We refer to \cite{CG, GR, FG} for a general discussion of the construction of ``corner vertex algebras'', i.e.
vertex algebras supported at 2d junctions between 3d topological boundary conditions in the GL-twisted 4d gauge theory. 

Here we will only need the following pieces of information, which hold as stated for non-rational $\kappa$. 
\begin{enumerate}
\item The 4d gauge theory is labelled by a gauge group $G$ and a topological coupling $\kappa$ (aka $\Psi$ in \cite{KWi, GR, CG}).
The term ``gauge group'' means roughly a choice of global form for the gauge algebra $\fg$. More precisely, it involves some extra data in the 
form of ``discrete theta angles''.
\item The 3d boundary conditions are associated to a spin-ribbon category of ``boundary line defects''. 
\item Consider two boundary conditions associated to categories $\cC_1$ and $\cC_2$. Then a 2d junction between these boundary conditions
is associated to a vertex algebra $A_{12}$ equipped with a functor $F_{12}: \bar \cC_1 \times \cC_2 \to A_{12}-\mathrm{mod}$. 
\item We often denote a junction between boundaries $B_1$ and $B_2$ as an arrow $B_1 \to B_2$. 
If we need to distinguish different junctions, we label the arrow. 
\item Junctions can be composed. The corresponding vertex algebras compose by extension along $\cC_2$:
\begin{equation} A_{13} \equiv (F_{12}\times F_{23})(1 \otimes \mathrm{Diag}_{\cC_2} \otimes 1) \end{equation}
\item There is a ``duality groupoid'' which acts on $\kappa$ by certain fractional linear transformations in $PGL_2(\bZ)$ and 
possibly on $G$. 4d theories related by duality are equivalent, and duality relates boundary conditions and junctions of 
dual pairs of theories. The categories of lines and junction vertex algebras match under duality. 
\item There are two universal families of boundary conditions $N_{p,q}$ and $D_{p,q}$, labelled by coprime integers $p$ and $q$ defined up to overall rescaling. 
The duality groupoid acts on the $(q,p)$ label as a column vector for $PGL_2(\bZ)$.
\item The category of lines at $N_{1,0}$ is essentially the Kazhdan-Lusztig category $KL_\kappa(G)$, 
up to subtleties associated to discrete theta angles. 
\item The category of lines at $D_{0,1}$ is essentially $D_\kappa-\mathrm{mod}(Gr_G)$, 
up to subtleties associated to discrete theta angles. We will not use this fact directly, but we will use the related observation that 
nice junctions of the form $D_{0,1} \to B$ support vertex algebras with a $V_{\kappa + n}(\fg)$ subalgebra with integer $n$ 
and admit a corresponding $G[[z]]$ action. 
\item There is an $N_{0,1} \to N_{1,0}$ junction supporting the ${\mathcal W}_\kappa(\fg)$ W-algebra, up to subtleties associated to discrete theta angles.
\item There is an $D_{0,1} \to N_{1,0}$ junction supporting the $\fg_\kappa:= V_{\kappa-h^\vee}(\fg)$  Kac-Moody algebra, up to subtleties associated to discrete theta angles.
\end{enumerate}
We denote with the greek symbol $\kappa$ the critically shifted level of algebras and with latin $k$ the non-critically shifted one.  

If we specialize to $\fg = \fsl_2$ and ignore subtleties concerning the global form of the group, the following junction vertex algebras have been identified \cite{CG, FG}: 
\begin{itemize}
\item $N_{0,1} \to N_{1,0}$: $\Vir_\kappa$. This is the Virasoro algebra of central charge $c_{\Vir_\kappa} = 13- 6 \kappa - 6 \kappa^{-1}$. 
\item $N_{0,1} \to N_{2,1}$: $\sVir_{2 \kappa-1}$. The $N=1$ super-Virasoro algebra $\sVir_\kappa$ has central charge $c_{\sVir_\kappa} = \frac{15}{2}- 3 \kappa - 3 \kappa^{-1}$. 
\item $D_{0,1} \to N_{1,0}$: $\fsl(2)_\kappa= V_{\kappa-2}(\fsl_2)$. This is the Kac-Moody algebra at critically shifted level $\kappa$. It has central charge $c_{\fsu(2)_\kappa} = 3-6 \kappa^{-1}$.
\item $D_{0,1} \to N_{2,1}$: $\fosp(1|2)_{2\kappa-1}$. This super-Kac-Moody algebra has an $\fsl(2)_{\frac{1+\kappa}{2}}$ subalgebra. 
It has central charge $c_{\fosp(1|2)_{2\kappa-1}} =1-3 \kappa^{-1}$.
\item $D_{0,1} \to D_{1,0}$: $L_1(d(2,1;-\kappa))$. This is a quotient of the super-Kac-Moody
  algebra based on the $d(2,1;-\kappa)$ exceptional
  superalgebra which is an extension of $\fsl(2)_{\kappa+1}
  \otimes \fsl(2)_{\kappa^{-1}+1} \otimes L_1(\fsu(2))$. The central charge of this vertex algebra is
  $c_{d(2,1;-\kappa)_1} = 1$. 
\end{itemize}

These algebras enjoy many coset relations. They can all be understood in terms of 
composition of junctions. These algebras also enjoy several relations based on quantum Hamiltonian reductions.
These relations map $D_{0,1} \to B$ junction vertex algebras to $N_{1,n} \to B$ junction vertex algebras,
for appropriate $n$. 

The main result of this paper is the identification of one more junction vertex algebra: 
\begin{itemize}
\item $D_{0,1} \to D_{2,-1}$: $\largeVir{1/2}{-1-2 \kappa}$. This is the large $N=4$ superconformal algebra at central charge $-6$, 
which is an extension of $\fsl(2)_{\kappa+1} \otimes \fsl(2)_{\frac{\kappa+1}{2 \kappa+1}}$
\end{itemize}
The extension is compatible with the composition of junctions $D_{0,1} \to N_{1,-1} \to D_{2,-1}$. Indeed, 
$D_{0,1} \to N_{1,-1}$ supports $\fsl(2)_{\kappa+1}$ and 
the $N_{1,-1} \to D_{2,-1}$ junction at coupling $\kappa$ is equivalent to the $D_{1,-2} \to N_{1,-1}$ junction 
at coupling $\kappa^{-1}$, which is equivalent to the $D_{1,0} \to N_{1,1}$ junction 
at coupling $2+\kappa^{-1}$ and to the $D_{0,1} \to N_{1,-1}$ junction 
at coupling $-\frac{\kappa}{2\kappa+1}$.

We also verify that this statement is compatible with the reduction of $D_{0,1} \to D_{2,1}$ to $D_{0,1} \to N_{2,1}$ and $N_{0,1} \to N_{2,1}$ junctions 
under quantum Hamiltonian reductions.

\subsection{The $\kappa \to \infty$ limit}
The gauge theory structures at rational $\kappa$ and $\kappa = \infty$ are somewhat more subtle than at generic $\kappa$. 
Still, one expects the junction vertex algebras to have reasonably good limits as $\kappa$ is brought to rational points,
though some fields such as Kac-Moody currents of infinite level or Virasoro modes of infinite central charge
may have to be rescaled and replaced by classical fields in the limit. 

The $\kappa \to \infty$ limit of the above table gives 
\begin{itemize}
\item $N_{0,1} \to N_{1,0}$: $\Vir_\infty$, classical Virasoro stress tensor. 
\item $N_{0,1} \to N_{2,1}$: Semi-classical super-Virasoro $\sVir_\infty$. The rescaled super-conformal generator 
survives the limit, but the OPE contains a classical stress tensor. 
\item $D_{0,1} \to N_{1,0}$: $\fsl(2)_\infty$, a classical $\mathfrak{sl}_2$ connection.
\item $D_{0,1} \to N_{2,1}$: $\fosp(1|2)_\infty$. The two fermionic generators survive as $\mathfrak{psu}(1|1)$
currents, with OPE deformed by a classical $\mathfrak{sl}(2)$ connection.
\item $D_{0,1} \to D_{1,0}$: $L_1(\mathfrak{psu}(2|2))$, with OPE deformed by a classical $\mathfrak{sl}(2)$ connection.
\end{itemize}
and our new entry becomes 
\begin{itemize}
\item $D_{0,1} \to D_{2,-1}$: $\smallVir{1/2}$, the small $N=4$ super-Virasoro of central charge $-9$, with OPE deformed by
a classical $\mathfrak{sl}(2)$ connection. 
\end{itemize}

\subsection{Gauge theory predictions from VOA}\label{sec:predtwo}
Somewhat surprisingly, this is not the first gauge theory appearance of the small $N=4$ super-Virasoro of central charge $-9$.
This vertex algebra was found, as well, as the chiral algebra associated to four-dimensional ${\cal N}=4$ gauge theory, in the sense of 
\cite{BLLPRR}. Concretely, it emerges as the result of a BRST reduction of symplectic bosons valued in the adjoint of the gauge group. 
The $\mathrm{sl}(2)$ global symmetry emerges in a surprising way and is not manifest in the BRST construction. 

The chiral algebras which appear in \cite{BLLPRR} can always be given an alternative physical construction, as boundary 
vertex algebras for certain three-dimensional gauge theories \cite{CoG}. In the case at hand, that would be the 
maximally supersymmetric (i.e. ${\cal N}=8$) three-dimensional $SU(2)$ gauge theory. The theory has an 
$\mathrm{so}(7)$ R-symmetry, which is enhanced to $\mathrm{so}(8)$ in the IR. 

The boundary supporting the chiral algebra preserves an 
$\mathrm{so}(3)_C \times \mathrm{so}(4)$ subgroup in $\mathrm{so}(7)$, which is enhanced to $\mathrm{so}(4) \times \mathrm{so}(4)$ in the IR. 
More precisely, the second $\mathrm{so}(4)$ factor includes the $\mathrm{su}(2)_H$ R-symmetry and 
an $\mathrm{su}(2)_F$ flavor symmetry. The first $\mathrm{so}(4)$ factor includes the $\mathrm{su}(2)_{C'}$ R-symmetry and 
the $\mathrm{su}(2)_{F'}$ flavor symmetry which emerge in the IR. 

Correspondingly, boundary chiral algebra itself is built though the BRST reduction of symplectic bosons,
which only makes manifest the $\mathfrak{sl}(2)$ current algebra associated to the $\mathrm{su}(2)_F$
flavor symmetry. The emergent $\mathrm{sl}(2)$ global symmetry is associated to the emergent 
$\mathrm{su}(2)_{F'}$ flavor symmetry.

Crucially, the kernel vertex algebras at $\kappa = \infty$ are expected to coincide
with the boundary vertex algebras of three-dimensional (i.e. ${\cal N}=4$) gauge theories 
which appear at ``duality walls''. For example, the $\kappa = \infty$ limit of $L_1(d(2,1;-\kappa))$
coincides with the boundary vertex algebra of the ``$T[SU(2)]$'' gauge theory. 

Hence we arrive at the following conjectures: 
\begin{itemize}
\item The $ST^2S$ duality wall in four-dimensional $SU(2)$ SYM supports the three-dimensional 
${\cal N}=8$ SCFT defined by three-dimensional $SU(2)$ gauge theory. 
\item The 3d SCFT is treated as 
an ${\cal N}=4$ theory, with $\mathrm{su}(2)_F \times \mathrm{su}(2)_{F'}\subset \mathrm{so}(8)_R$ flavour symmetry.
\item The 3d SCFT is coupled to the four-dimensional gauge theories on the two sides of the wall 
by gauging $\mathrm{su}(2)_F \times \mathrm{su}(2)_{F'}$.
\end{itemize}

It would be nice to test these conjectures further. This could be done, for example, by 
computing the action of 't Hooft-Wilson lines on the duality wall as in \cite{DGV}.
This should parallel a similar check about the action of Hecke modifications on 
conformal blocks of kernel vertex algebras, which is also an open problem even for $n=1$. 
\\

\noindent{\bf Acknowledgements}
TC thanks Tomoyuki Arakawa for discussions on related issues. He is supported by NSERC discovery grant $\#$RES0020460. 
DG is supported by the NSERC Discovery Grant program and by the Perimeter Institute for Theoretical Physics. Research at Perimeter Institute is supported by the Government of Canada through Industry Canada and by the Province of Ontario through the Ministry of Research and Innovation.
AL is supported by Simons Foundation Grant \#318755.

\section{The large $N=4$ superconformal algebra}
The large $N=4$ superconformal algebra $V(k,a)$ arises as a quantum Hamiltonian reduction of the affine vertex operator superalgebra of the exceptional Lie superalgebra $d(2,1; a)$ at level $k$ \footnote{The reader should be warned that $k$ here is $k_d$ in the introduction, and has nothing to do with the $\kappa$ of the gauge theory,
which is instead related to $a$.}. It depends on two complex parameters $k$ and $a$, and has strong generators $e, f, h, e', f', h', G^{\pm\pm}, L$. Here $L$ is a Virasoro field of central charge $$c=-6k-3,$$ $e, f, h, e', f', h'$ are even primary fields of weight $1$, and $G^{\pm\pm}$ are odd primary of weight $3/2$. The operator product algebra appears explicitly in \cite{KW}. First, $\{e,f,h\}$ and $\{e',f',h'\}$ generate two commuting affine $\mathfrak{sl}(2)$ algebras at levels $- \frac{a+1}{a}k-1$ and $-(a+1)k-1$, respectively.
\begin{equation}\nonumber
\begin{split}
h'(z) h'(w) & \sim  -2((a+1)k+1)(z-w)^{-2}, \qquad
h(z) h(w) \sim -2 \left(\frac{a+1}{a}k+1\right)(z-w)^{-2},\\
e' (z) f'(w) & \sim - ((a+1)k+1)(z-w)^{-2} + h'(w)(z-w)^{-1}, \\
e(z) f(w) & \sim -  \left(\frac{a+1}{a}k+1\right)(z-w)^{-2} + h(w)(z-w)^{-1},\\
h' (z) e'(w) &\sim 2e'(w)(z-w)^{-1} \qquad
h(z) e(w) \sim 2e(w)(z-w)^{-1}, \\
h' (z) f'(w) & \sim -2f'(w)(z-w)^{-1}, \qquad
h(z) f(w) \sim  -2f(w)(z-w)^{-1}.
\end{split}
\end{equation}
Next, these act on the odd fields $G^{\pm \pm}$ by
\begin{equation}\nonumber
\begin{split}
h'(z) G^{\pm\pm}(w) & \sim \pm  G^{\pm\pm}(w)(z-w)^{-1}, \qquad
h' (z) G^{\pm\mp}(w) \sim \pm  G^{\pm\mp}(w)(z-w)^{-1}, \\
h (z)  G^{\pm\pm}(w) &\sim \pm  G^{\pm\pm}(w)(z-w)^{-1}, \qquad
h (z)  G^{\pm\mp}(w) \sim \mp  G^{\pm\mp}(w)(z-w)^{-1}, \\
e' (z)  G^{- -}(w) & \sim -  G^{+ -}(w)(z-w)^{-1}, \qquad
e' (z)  G^{- +}(w) \sim -  G^{+ +}(w)(z-w)^{-1}, \\
e (z)  G^{- -}(w) & \sim   G^{- +}(w)(z-w)^{-1}, \qquad
e (z)  G^{+ -}(w) \sim   G^{+ +}(w)(z-w)^{-1}, \\
f' (z)  G^{+ +}(w) & \sim - G^{- +}(w)(z-w)^{-1}, \qquad
f' (z) G^{+ -}(w) \sim  -  G^{- -}(w)(z-w)^{-1}, \\
f (z) G^{+ +}(w) & \sim   G^{+ -}(w)(z-w)^{-1}, \qquad
f (z) G^{- +}(w)  \sim   G^{- -}(w)(z-w)^{-1}, \\
\end{split}
\end{equation}
Finally, the OPEs of the odd fields are 
\begin{equation}\nonumber
\begin{split}
G^{+ +}(z)   G^{+ +}(w) &  \sim   \frac{2a}{(a+1)^2} (:e'e:) (w)(z-w)^{-1}, \qquad
G^{- -} (z) G^{- -}(w) \sim  \frac{2a}{(a+1)^2} (:f' f:)(w)(z-w)^{-1}, \\ 
G^{- +}(z)  G^{- +}(w) & \sim  -\frac{2a}{(a+1)^2} (:f'e:)(w)(z-w)^{-1}, \qquad
G^{+ -}(z) G^{+ -}(w) \sim   -\frac{2a}{(a+1)^2} (:e'f:)(w)(z-w)^{-1}, \\ 
G^{+ +}(z)  G^{- +}(w) & \sim -  \frac{2a}{a+1}\bigg(\frac{1}{a+1} + k \bigg) e(w)(z-w)^{-2} \\ & + \bigg(\frac{a}{(a+1)^2} :h'e: -  \frac{a}{a+1}\bigg(\frac{1}{a+1} + k \bigg) \partial e\bigg)(w)(z-w)^{-1},
\end{split}
\end{equation}
\begin{equation}\nonumber
\begin{split}
G^{+ +}(z) G^{+ -}(w) & \sim   \frac{2}{a+1}\bigg(\frac{a}{a+1} + k \bigg) e'(w)(z-w)^{-2} \\ & + \bigg(-\frac{a}{(a+1)^2} :he': +  \frac{1}{a+1}\bigg(\frac{a}{a+1} + k \bigg) \partial e'\bigg)(w)(z-w)^{-1},
\end{split}
\end{equation}
\begin{equation}\nonumber
\begin{split}
G^{- -}(z) G^{- +}(w) & \sim   \frac{2}{a+1}\bigg(\frac{a}{a+1} + k \bigg) f'(w)(z-w)^{-2} \\ & + \bigg(\frac{a}{(a+1)^2} :hf': +  \frac{1}{a+1}\bigg(\frac{a}{a+1} + k \bigg) \partial f'\bigg)(w)(z-w)^{-1},\\
G^{- -}(z)  G^{+ -}(w) & \sim -  \frac{2a}{a+1}\bigg(\frac{1}{a+1} + k \bigg) f(w)(z-w)^{-2} \\ & + \bigg(-\frac{a}{(a+1)^2} :h'f: -  \frac{a}{a+1}\bigg(\frac{1}{a+1} + k \bigg) \partial f\bigg)(w)(z-w)^{-1},
\end{split}
\end{equation}
\begin{equation}\nonumber
\begin{split}
G^{+ +}(z)  G^{- -}(w)  & \sim   - 2 \left(k(k+1) +\frac{a}{(a+1)^2}\right)(z-w)^{-3} + \bigg(\frac{a + k + a k}{(1 + a)^2} h' +  \frac{a (1 + k + a k)}{(1 + a)^2} h\bigg)(w)(z-w)^{-2} \\  
& + \bigg(k L +\frac{a}{4 (1 + a)^2} :h' h':  + \frac{a}{4 (1 + a)^2} :hh:  - \frac{a}{2 (1 + a)^2} :h h':   + \frac{a}{(1 + a)^2} :e' f':   \\ 
& + \frac{a}{(1 + a)^2} :ef:  + \frac{a k}{2 (1 + a)} \partial h   + \frac{k}{2 (1 + a)} \partial h' \bigg)(w)(z-w)^{-1}\\
G^{- +}(z)  G^{+ -}(w) &  \sim   2 \left(k(k+1) +\frac{a}{(a+1)^2}\right)(z-w)^{-3} + \bigg(\frac{a + k + a k}{(1 + a)^2} h' -  \frac{a (1 + k + a k)}{(1 + a)^2} h\bigg)(w)(z-w)^{-2} \\ 
& + \bigg(-k L -\frac{a}{4 (1 + a)^2} :h' h':  - \frac{a}{4 (1 + a)^2} :hh:  - \frac{a}{2 (1 + a)^2} :h h':   - \frac{a}{(1 + a)^2} :e' f':   \\ 
& - \frac{a}{(1 + a)^2} :ef:  - \frac{a k}{2 (1 + a)} \partial h   + \frac{2 a + k + a k}{2 (1 + a)^2} \partial h' \bigg)(w)(z-w)^{-1}\\
\end{split}
\end{equation}
We shall denote by $\largeVir{k}{a}$ the simple quotient of $V(k,a)$ by its maximal proper ideal $\cI_{k,a}$ graded by conformal weight. For generic values of $k$ and $a$, $V(k,a)$ is simple so that $V(k,a) = \largeVir{k}{a}$. 


It will be convenient to introduce a change of variables and replace the Virasoro field $L$ with the field 
$$L^C = L - L^{\mathfrak{sl}'_2},\qquad L^{\mathfrak{sl}'_2} = -\frac{1}{4(-1+k+a k)} \bigg(:h' h': + 2 :x' y':  + 2 :y' x': \bigg).$$
Note that $L^C$ is just the Virasoro field for the coset $$\text{Com}(V^{-(a+1)k-1} (\mathfrak{sl}_2), V(k,a)),$$ where $V^{-(a+1)k-1}(\mathfrak{sl}_2) \subseteq V(k,a)$ is the subVOA generated by $\{x', y', h'\}$. The central charge of $L^C$ is  $-\frac{6 k (a + k + a k)}{-1 + k + a k}$. With this change of variables, the fields $G^{\pm \pm}$ are no longer primary, but now satisfy
\begin{equation}
\begin{split} L^C(z) G^{++}(w) & \sim \frac{3  (-1 + 2 k + 2 a k)}{4 (-1 + k + a k)} G^{++}(w)(z-w)^{-2} 
\\ & + \bigg(\frac{1}{2 (-1 + k + a k)} :h' G^{++}: + \frac{1}{1 - k - a k} :x' G^{-+}: + \partial G^{++}\bigg)(w)(z-w)^{-1},\end{split} \end{equation}
 
\begin{equation}
\begin{split} L^C(z) G^{+-}(w) & \sim \frac{3 (-1 + 2 k + 2 a k)}{4(-1 + k + a k)} G^{+-}(w)(z-w)^{-2} 
\\ & + \bigg(\frac{1}{2 (-1 + k + a k)} :h' G^{+-}: + \frac{1}{1 - k - a k} :x' G^{--}: + \partial G^{+-}\bigg)(w)(z-w)^{-1},\end{split} \end{equation}

\begin{equation}
\begin{split} L^C(z) G^{--}(w) & \sim \frac{3  (-1 + 2 k + 2 a k)}{4 (-1 + k + a k)} G^{--}(w)(z-w)^{-2} 
\\ & + \bigg(-\frac{1}{2 (-1 + k + a k)} :h' G^{--}: + \frac{1}{1 - k - a k} :y' G^{+-}: + \partial G^{--}\bigg)(w)(z-w)^{-1},\end{split} \end{equation}

\begin{equation}
\begin{split} L^C(z) G^{-+}(w) & \sim \frac{3  (-1 + 2 k + 2 a k)}{4 (-1 + k + a k)} G^{-+}(w)(z-w)^{-2} 
\\ & + \bigg(-\frac{1}{2 (-1 + k + a k)} :h' G^{-+}: + \frac{1}{1 - k - a k} :y' G^{++}: + \partial G^{-+}\bigg)(w)(z-w)^{-1},\end{split} \end{equation}

Additionally, the OPEs $G^{++}(z) G^{--}(w)$ and $G^{-+}(z) G^{+ -}(w)$ are replaced with

\begin{equation}
\begin{split}
G^{+ +}(z)  G^{- -}(w)  & \sim   - 2 \left(k(k+1) +\frac{a}{(a+1)^2}\right)(z-w)^{-3} 
\\ & + \bigg(\frac{a + k + a k}{(1 + a)^2} h' +  \frac{a (1 + k + a k)}{(1 + a)^2} h\bigg)(w)(z-w)^{-2} 
\\ & + \bigg(k L^C - \frac{a + k + a k}{4 (1 + a)^2 (-1 + k + a k)} :h' h':  
+ \frac{a}{4 (1 + a)^2} :hh:  - \frac{a}{2 (1 + a)^2} :h h': 
\\ &  - \frac{a + k + a k}{(1 + a)^2 (-1 + k + a k)} :e' f':   + \frac{a}{(1 + a)^2} :ef:  + \frac{a k}{2 (1 + a)} \partial h   
\\ & + \frac{k (a + k + a k)}{2 (1 + a) (-1 + k + a k)} \partial h' \bigg)(w)(z-w)^{-1},\end{split}
\end{equation}

\begin{equation}
\begin{split} G^{- +}(z)  G^{+ -}(w) &  \sim   2 \left(k(k+1) +\frac{a}{(a+1)^2}\right)(z-w)^{-3} 
\\ & + \bigg(\frac{a + k + a k}{(1 + a)^2} h' -  \frac{a (1 + k + a k)}{(1 + a)^2} h\bigg)(w)(z-w)^{-2} \\ 
& + \bigg(-k L^C + \frac{a + k + a k}{4 (1 + a)^2 (-1 + k + a k)} :h' h':  - \frac{a}{4 (1 + a)^2} :hh:  - \frac{a}{2 (1 + a)^2} :h h': 
\\ & + \frac{a + k + a k}{(1 + a)^2 (-1 + k + a k)} :e' f':  - \frac{a}{(1 + a)^2} :ef:  - \frac{a k}{2 (1 + a)} \partial h   \\ & + \frac{(-2 + k + a k) (a + k + a k)}{2 (1 + a)^2 (-1 + k + a k)} \partial h' \bigg)(w)(z-w)^{-1}\\
\end{split}
\end{equation}

From now on, we shall replace $L$ with $L^C$, and use these OPE relations instead of the original ones. We may regard $k$ and $a$ as formal variables, and we regard $V(k,a)$ as a vertex algebra over the ring $R$ consisting of rational functions in $k$ and $a$ with possible poles at $a = 0$, $a+1=0$, and $1 - k - a k = 0$. Since $V(k,a)$ is generically simple, $V(k,a)$ is simple as a vertex algebra over $R$. Also, it is freely generated as a vertex superalgebra, so each weight graded space $V(k,a)[n]$ for $n \in \frac{1}{2} \mathbb{Z}_{\geq 0}$, is a free $R$-module.

\subsection{$\largeVir{k}{a}$ as deformable families}
For all $n\in \frac{1}{2} \mathbb{Z}_{\geq 0}$, $V(k,a)[n]$ has the Shapovalov pairing $\langle, \rangle_n$ given by $$\langle \omega, \nu \rangle = \omega_{(2n-1)} \nu,$$ which takes values in $R$. If $p=p(k,a)$ is an irreducible factor of the determinant of $\langle, \rangle_n$ for some $n$, there will be a nontrivial null vector in $V(k,a)[n]$. Let $I_p \subset R$ be the ideal generated by $p$. Since $R \cong V(k,a)[0]$, we can regard $I_p$ as a subset of $V(k,a)[0]$. Let $$I_p \cdot V(k,a) \subset V(k,a)$$ be the set of $I_p$-linear combinations of elements of $V(k,a)$, which is just the vertex algebra ideal generated by $I_p$. Then the quotient
$$V^{I_p}(k,a) = V(k,a) / (I_p\cdot V(k,a))$$ is a vertex algebra defined over the ring $R_p = R/I_p$. It is a free $R_p$-module, but it is is not simple as a vertex algebra over $R_p$ since its maximal proper graded ideal $\cI_p$ contain all null vectors of the Shapovalov pairing. For each $n$, the weight $n$ component $\cI_p[n]$ is $R_p$-submodule of $V^{I_p} (k,a)[n]$, and the simple quotient
$$L^{I_p}(k,a) = V^{I_p}(k,a) / \cI_p$$ is again a vertex algebra over $R_p$. However, $\cI_p[n]$ need not be a direct summand of $V^{I_p} (k,a)[n]$, and the quotient need not be free.

Given a multiplicatively closed subset $D \subset R_p$, let $D^{-1}R_p$ denote the localization of $R_p$ along $D$, and consider the following localizations of $R_p$-modules 
\begin{equation}
\begin{split}
D^{-1} V^{I_p}(k,a) &= (D^{-1}R_p) \otimes_{R_p} V^{I_p}(k,a),\\
 D^{-1} \cI_p &= (D^{-1}R_p) \otimes_{R_p} \cI_p,\\
 D^{-1} L^{I_p}(k,a) &= (D^{-1}R_p) \otimes_{R_p} L^{I_p}(k,a).
\end{split}
\end{equation} 

\begin{lemma} There exists a multiplicatively closed subset $D \subset R_p$ which is at most countably generated, such that $D^{-1} L^{I_p}(k,a)$ is a free $D^{-1}R_p$-module.
\end{lemma}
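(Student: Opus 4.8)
The plan is to reduce the statement to a graded version of generic freeness. First I would record the structural facts about the base ring. Since $R$ is a localization of the polynomial ring $\CC[k,a]$, it is a Noetherian unique factorization domain; as $p$ is irreducible it is prime in $R$, so $I_p = (p)$ is a prime ideal and $R_p = R/I_p$ is a Noetherian integral domain. Let $K_p$ denote its field of fractions. Next, because $V(k,a)$ is freely generated by the finitely many fields $e,f,h,e',f',h',G^{\pm\pm},L^C$, every weight space $V(k,a)[n]$ is a free $R$-module of finite rank; hence $V^{I_p}(k,a)[n]$ is a free $R_p$-module of finite rank, and its quotient $L^{I_p}(k,a)[n]$ is a finitely generated $R_p$-module for each $n \in \frac{1}{2}\ZZ_{\geq 0}$.

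The engine of the proof is generic freeness for a single finitely generated module $M$ over a Noetherian domain $A$ with fraction field $K_p$. Choosing $m_1,\dots,m_r \in M$ whose images form a $K_p$-basis of $M \otimes_A K_p$ gives a map $\phi\colon A^r \to M$ which becomes an isomorphism after $\otimes_A K_p$. Its kernel is a torsion submodule of the torsion-free module $A^r$, hence zero, so $\phi$ is injective; its cokernel is a finitely generated torsion module, hence annihilated by some nonzero $s \in A$, so $\phi$ becomes surjective after inverting $s$. Thus $M_s$ is free over $A_s$. Applying this with $M = L^{I_p}(k,a)[n]$ produces, for each $n \in \frac{1}{2}\ZZ_{\geq 0}$, a nonzero $s_n \in R_p$ such that $(L^{I_p}(k,a)[n])_{s_n}$ is free over $(R_p)_{s_n}$.

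Finally I would assemble a single multiplicative set. Let $D \subset R_p$ be the multiplicatively closed subset generated by $\{s_n : n \in \frac{1}{2}\ZZ_{\geq 0}\}$; it is countably generated, and since $R_p$ is a domain with each $s_n \neq 0$ we have $0 \notin D$, so the localization is nontrivial. Because $s_n \in D$, one has $D^{-1}(L^{I_p}(k,a)[n]) \cong (D^{-1}R_p) \otimes_{(R_p)_{s_n}} (L^{I_p}(k,a)[n])_{s_n}$, a base change of a free module and hence free over $D^{-1}R_p$. As localization commutes with the direct sum over weights, $D^{-1} L^{I_p}(k,a) = \bigoplus_n D^{-1}(L^{I_p}(k,a)[n])$ is a direct sum of free $D^{-1}R_p$-modules, and is therefore free, as required. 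The one genuine subtlety — and the reason a single inverted element does not suffice — is that the grading runs over infinitely many weights; the resolution is precisely to absorb one denominator $s_n$ per weight into a countably generated multiplicative set, together with the observation that freeness survives both further localization (base change) and the direct sum.
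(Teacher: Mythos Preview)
Your proof is correct and follows essentially the same strategy as the paper: apply generic freeness weight-by-weight over the Noetherian domain $R_p$ to obtain a single nonzero element (or finitely generated multiplicative set) per weight, then take the multiplicative set generated by these countably many elements. The only cosmetic difference is that the paper phrases the $n$th step as finding $D_n$ so that $D_n^{-1}\cI_p[n]$ splits off as a free direct summand of $D_n^{-1}V^{I_p}(k,a)[n]$, whereas you work directly with the quotient $L^{I_p}(k,a)[n]$ and invoke generic freeness explicitly; your version is slightly more self-contained, but the content is the same.
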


\begin{proof} For each $n$, since $V^{I_p} (k,a)[n]$ is a free $R_p$-module of finite rank, there exists a finitely generated multiplicative set $D_n \subset R_p$ with the following property: $D_n^{-1} V^{I_p} (k,a)[n]$ has a splitting $$D_n^{-1} V^{I_p}(k,a)[n] = D^{-1} \cI_p[n] \oplus C_n,$$ where $C_n$ is a complementary $D^{-1} R_p$-submodule, and both summands are free $D^{-1} R_p$-modules.

Taking $D = \bigcup D_n$, we see that $D$ is at most countably generated, and as a $D^{-1}R_p$-module,
$$D^{-1} V^{I_p} (k,a)  = \bigg(\bigoplus_{n\geq 0} C_n\bigg) \bigoplus D^{-1} \cI_p,$$ so that $$D^{-1} L^{I_p} (k,a)  = \bigoplus_{n\geq 0} C_n,$$ and in particular is free.
\end{proof}

In this paper, we only need the case where $p(a,k) = k - k_0$ for some fixed level $k_0$. In this case, $R_p$ is isomorphic to the ring of rational functions in $a$ with possible poles at $a = 0$, $a=-1$, and $ a = \frac{1-k_0}{k_0}$, which we denote by $\mathbb{C}[a]_{\{0,-1, (1-k_0)/k_0\}}$. In this case, we denote the simple quotient $L^{I_p}(k,a)$ by $\largeVir{k_0}{a}$. 

More generally, for a subset $A\subset \mathbb{C}$, let $\mathbb{C}[a]_A$ denote the ring of rational functions in $a$ with possible poles in $A$. As special case of the above result, we have 

\begin{corol} \label{cor:deformfam}
For any $k_0\in \mathbb{C}$, there exists a subset $A\subset \mathbb{C}$ which is at most countable, such that $$\mathbb{C}[a]_A \otimes_{\mathbb{C}[a]_{\{0,-1, (1-k_0)/k_0\}}} \largeVir{k_0}{a}$$ is a free $\mathbb{C}[a]_A$-module. 
\end{corol}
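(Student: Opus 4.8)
The plan is to deduce the corollary directly from the preceding Lemma by specializing to $p(k,a) = k - k_0$ and then translating the abstract localization of $R_p$ into the pole-set language of $\mathbb{C}[a]_A$. Since the Lemma already delivers freeness over a localized ring, the only work left is to recognize that localized ring as $\mathbb{C}[a]_A$ for a countable $A$.

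First I would apply the Lemma with $p = k - k_0$. As recorded just above the corollary, in this case $R_p \cong \mathbb{C}[a]_{A_0}$ with $A_0 = \{0,\, -1,\, (1-k_0)/k_0\}$, and $L^{I_p}(k,a)$ is by definition $\largeVir{k_0}{a}$. The Lemma then produces a multiplicatively closed subset $D \subset R_p$, generated by an at most countable family $\{d_i\}_{i \in I}$, such that $D^{-1}\largeVir{k_0}{a}$ is a free $D^{-1}R_p$-module.

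Next I would identify $D^{-1}R_p$ with $\mathbb{C}[a]_A$ for a suitable countable $A$. Each generator $d_i \in R_p$ is a rational function $f_i/g_i$ in $a$ whose denominator $g_i$ vanishes only on $A_0$; since $g_i$ is already a unit in $R_p$, inverting $d_i$ is the same as inverting the numerator $f_i \in \mathbb{C}[a]$, which adjoins precisely poles at the finitely many roots of $f_i$. Setting $A = A_0 \cup \bigcup_{i \in I} \{\alpha : f_i(\alpha)=0\}$, a countable union of finite sets and hence at most countable, we obtain $D^{-1}R_p = \mathbb{C}[a]_A$. By definition of localization as an extension of scalars, $D^{-1}\largeVir{k_0}{a} = \mathbb{C}[a]_A \otimes_{R_p} \largeVir{k_0}{a} = \mathbb{C}[a]_A \otimes_{\mathbb{C}[a]_{A_0}} \largeVir{k_0}{a}$, and the freeness supplied by the Lemma is exactly the assertion of the corollary.

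The only content beyond invoking the Lemma is the bookkeeping in the previous paragraph: checking that inverting an at most countably generated $D \subset \mathbb{C}[a]_{A_0}$ again yields a ring of the form $\mathbb{C}[a]_A$ with $A$ countable. This is where I would take care, since a priori a generator could be a unit (contributing no new poles) or have a reducible numerator, but in every case the newly inverted points are the finitely many roots of a single polynomial $f_i$, so the total pole set remains countable. I therefore expect no genuine obstacle here, only the need to state this localization dictionary cleanly; once it is in place the corollary is immediate.
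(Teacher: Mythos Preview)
Your proposal is correct and follows exactly the approach the paper intends: the corollary is stated there as an immediate special case of the preceding Lemma with $p=k-k_0$, using the identification $R_p\cong\mathbb{C}[a]_{\{0,-1,(1-k_0)/k_0\}}$. Your additional bookkeeping showing that inverting a countably generated $D$ in $\mathbb{C}[a]_{A_0}$ yields $\mathbb{C}[a]_A$ with $A$ countable is precisely the detail the paper leaves implicit.
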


Note that if we rescale $e', f', h'$ by a factor of $1/a$, $V(k,a)$ is defined over the ring of rational functions in $a$ and $k$ of degree at most zero in $a$, with possible poles at $a=0$, $a=-1$, and $a = \frac{1 - k}{k}$. Likewise, with this rescaling, $\largeVir{k_0}{a}$ is defined over the ring $F$ of rational functions in $a$ of degree at most zero, with possible poles at $a = 0$, $a=-1$, or $a = \frac{1 - k_0}{k_0}$ Similarly, given a subset $A\subset \mathbb{C}$, we denote by $F_A$ the ring of rational functions in $a$ of degree at most zero, with possible poles in $A$.  In the language of \cite{CL}, Corollary \ref{cor:deformfam} implies that $F_A \otimes_F \largeVir{k_0}{a}$ is a {\it deformable family} of vertex superalgebras. In particular, its graded character is independent of $a$, and coincides with the graded character of the limit $$\largeVir{k_0}{\infty} = \lim_{a \rightarrow \infty} \largeVir{k_0}{a},$$ which is a well-defined vertex algebra over $\mathbb{C}$.

\subsection{The small $N=4$ superconformal algebra as a limit of $\largeVir{k}{a}$}
Regarding $k$ as fixed, and rescaling $e', f', h'$ by $1/ a$ as above, observe that in the limit $\largeVir{k}{\infty}$, the fields $e', f', h'$ become central, and $L^C, e,f,h,G^{\pm \pm}$ satisfy the following OPEs.

\begin{equation}\label{eq:limitsmall 1}
\begin{split}
h(z) h(w) & \sim -2(k+1)(z-w)^{-2},\qquad e(z) f(w)  \sim - (k+1)(z-w)^{-2} + h(w)(z-w)^{-1},\\
h(z) e(w) & \sim 2e(w)(z-w)^{-1}, \qquad h(z) f(w) \sim  -2f(w)(z-w)^{-1}.
\end{split}
\end{equation}

\begin{equation}\label{eq:limitsmall 2}
\begin{split}
h (z)  G^{\pm\pm}(w) &\sim \pm  G^{\pm\pm}(w)(z-w)^{-1}, \qquad
h (z)  G^{\pm\mp}(w) \sim \mp  G^{\pm\mp}(w)(z-w)^{-1}, \\
e (z)  G^{- -}(w) & \sim   G^{- +}(w)(z-w)^{-1}, \qquad
e (z)  G^{+ -}(w) \sim   G^{+ +}(w)(z-w)^{-1}, \\
f (z) G^{+ +}(w) & \sim   G^{+ -}(w)(z-w)^{-1}, \qquad
f (z) G^{- +}(w)  \sim   G^{- -}(w)(z-w)^{-1}, \\
\end{split}
\end{equation}

\begin{equation} \label{eq:limitsmall 3}
\begin{split} 
L^C(z) G^{++}(w) & \sim \frac{3}{2} G^{++}(w)(z-w)^{-2} + \bigg(\frac{1}{2k} :h' G^{++}: -\frac{1}{k} :x' G^{-+}: + \partial G^{++}\bigg)(w)(z-w)^{-1},\\
L^C(z) G^{+-}(w) & \sim \frac{3}{2} G^{+-}(w)(z-w)^{-2} + \bigg(\frac{1}{2 k} :h' G^{+-}: - \frac{1}{k} :x' G^{--}: + \partial G^{+m}\bigg)(w)(z-w)^{-1},\\
L^C(z) G^{--}(w) & \sim \frac{3}{2} G^{--}(w)(z-w)^{-2} + \bigg(-\frac{1}{2 k} :h' G^{--}: - \frac{1}{k} :y' G^{+-}: + \partial G^{--}\bigg)(w)(z-w)^{-1},\\
L^C(z) G^{-+}(w) & \sim \frac{3}{2} G^{-+}(w)(z-w)^{-2} + \bigg(-\frac{1}{2 k} :h' G^{-+}: - \frac{1}{k} :y' G^{++}: + \partial G^{-+}\bigg)(w)(z-w)^{-1},
 \end{split} \end{equation}

\begin{equation} \label{eq:limitsmall 4}
\begin{split}
G^{+ +}(z)   G^{+ +}(w) &  \sim  2 (:e'e:) (w)(z-w)^{-1}, \qquad G^{- -} (z) G^{- -}(w) \sim 2 (:f' f:)(w)(z-w)^{-1}, \\ 
G^{- +}(z)  G^{- +}(w) & \sim  -2 (:f'e:)(w)(z-w)^{-1}, \qquad G^{+ -}(z) G^{+ -}(w) \sim  -2 (:e'f:)(w)(z-w)^{-1}, \\ 
G^{+ +}(z)  G^{- +}(w) & \sim -  2 k e(w)(z-w)^{-2} + \big( :h'e: -  k \partial e\big)(w)(z-w)^{-1},\\
G^{+ +}(z) G^{+ -}(w) & \sim  2(1+k) e'(w)(z-w)^{-2} + \big(- :he': +  (1+k) \partial e'\big)(w)(z-w)^{-1},\\
G^{- -}(z) G^{- +}(w) & \sim  2(1+k) f'(w)(z-w)^{-2}  + \big( :hf': +  (1+k) \partial f'\big)(w)(z-w)^{-1},\\
G^{- -}(z)  G^{+ -}(w) & \sim - 2k f(w)(z-w)^{-2} + \big(- :h'f: - k \partial f\big)(w)(z-w)^{-1},\\
G^{+ +}(z)  G^{- -}(w)  & \sim   - 2 k(k+1)(z-w)^{-3}  + \big((k+1)h'+ k\ h  \big)(w)(z-w)^{-2}
\\ & + \bigg(k L^C -  \frac{k+1}{4k} :h' h': - \frac{k+1}{k} :e' f':  - \frac{1}{2} :h h': + \frac{k}{2} \partial h + \frac{k+1}{2} \partial h'  \bigg)(w)(z-w)^{-1},
\\ G^{- +}(z)  G^{+ -}(w) &  \sim   2 k(k+1) (z-w)^{-3}  + \big((k+1)h' -  k\ h\big)(w)(z-w)^{-2}  
\\ & + \bigg(-k L^C  +  \frac{k+1}{4k} :h' h': + \frac{k+1}{k} :e' f': - \frac{1}{2} :h h': - \frac{k}{2} \partial h + \frac{k+1}{2} \partial h'   \bigg)(w)(z-w)^{-1}.
\end{split}
\end{equation}
Additionally, $L^C$ has central charge $c = -6(k+1)$. For fixed $k$, let $Z\subset \largeVir{k}{\infty}$ denote the vertex algebra generated by  $e', f', h'$, which is a commutative Heisenberg algebra of rank $3$. Moreover, since $Z$ is central in  $\largeVir{k}{\infty}$, it generates a vertex algebra ideal $\cI_{Z}$, and the quotient $$\mathcal{V}^k = \largeVir{k}{\infty} / \cI_{Z},$$ is a vertex algebra with strong generators $L^C,e,f,h,G^{\pm\pm}$.
From the above calculations, we obtain

\begin{lemma} \label{lemma:ainftylimit} We have an exact sequence of vertex algebras 
$$ 0 \rightarrow \cI_{Z}  \rightarrow \largeVir{k}{\infty} \rightarrow \mathcal{V}^k \rightarrow 0.$$ Moreover, the OPE relations \eqref{eq:limitsmall 1}-\eqref{eq:limitsmall 2} hold in $\mathcal{V}^k$, but \eqref{eq:limitsmall 3} and \eqref{eq:limitsmall 4} are replaced with

\begin{equation} \label{eq:limitsmall 5}
\begin{split}
L^C(z) G^{\pm \pm }(w) & \sim \frac{3}{2} G^{\pm \pm}(w)(z-w)^{-2} +  \partial G^{\pm \pm} (w)(z-w)^{-1},\\
G^{+ +}(z)   G^{+ +}(w) &  \sim  0, \qquad G^{- -} (z) G^{- -}(w) \sim 0, \\ 
G^{- +}(z) G^{- +}(w) & \sim  0, \qquad G^{+ -}(z) G^{+ -}(w) \sim  0, \\ 
G^{+ +}(z) G^{+ -}(w) & \sim  0,\qquad G^{- -}(z) G^{- +}(w)  \sim  0,\\
G^{+ +}(z)  G^{- +}(w) & \sim -  2k \ e(w)(z-w)^{-2} - k \ \partial e(w)(z-w)^{-1},\\ 
G^{- -}(z)  G^{+ -}(w) & \sim -  2k\ f(w)(z-w)^{-2} - k \ \partial f(w)(z-w)^{-1},\\
G^{+ +}(z)  G^{- -}(w)  & \sim   - 2 k(k+1)(z-w)^{-3}  + k\ h  (w)(z-w)^{-2} + \big(k L^C  + \frac{k}{2} \partial h \big)(w)(z-w)^{-1},
\\ G^{- +}(z)  G^{+ -}(w) &  \sim   2 k(k+1) (z-w)^{-3}   -  k\ h(w)(z-w)^{-2}  + \big(-k L^C  : - \frac{k}{2} \partial h  \big)(w)(z-w)^{-1}.
\end{split}
\end{equation}
Therefore the generators $L^C,e,f,h,G^{\pm\pm}$ of $\mathcal{V}^k$ satisfy the OPE relations of the small $N=4$ superconformal algebra at level $k$. \end{lemma}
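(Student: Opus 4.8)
The plan is to treat the statement as one formal fact (the short exact sequence) together with one explicit but mechanical OPE computation, carried out entirely inside $\largeVir{k}{\infty}$, which the preceding subsections have already established to be a genuine vertex algebra over $\CC$ — freely built from the strong generators $L^C,e,f,h,e',f',h',G^{\pm\pm}$ with the OPEs \eqref{eq:limitsmall 1}--\eqref{eq:limitsmall 4} obtained as the $a\to\infty$ limits of the relations of Section~2 after the $1/a$ rescaling of $e',f',h'$. I take the existence and strong generation of $\largeVir{k}{\infty}$, which is the substantive input supplied by the deformable-family machinery, as given.

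First I would record that $Z=\langle e',f',h'\rangle$ is central. Reading off \eqref{eq:limitsmall 1}--\eqref{eq:limitsmall 4}, none of $e',f',h'$ (equivalently $x',y',h'$) appears in a singular OPE with any generator: among themselves they generate the commutative rank-three Heisenberg algebra $Z$, since the level and the cross terms of the primed $\mathfrak{sl}_2$ were sent to $0$ by the $1/a$ rescaling as $a\to\infty$; they commute with $e,f,h$ because the two $\mathfrak{sl}_2$ factors commute; their action on $G^{\pm\pm}$ was likewise suppressed by $1/a$; and they commute with the coset field $L^C$ by construction. Hence the subspace $\cI_Z$ that $Z$ generates — the span of all normally ordered monomials in the generators containing at least one factor drawn from $\{e',f',h'\}$ or their derivatives — is a genuine vertex-algebra ideal, and it is proper because $L^C\notin\cI_Z$. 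The sequence $0\to\cI_Z\to\largeVir{k}{\infty}\to\mathcal{V}^k\to 0$ is then the canonical one, and since $\largeVir{k}{\infty}$ is strongly generated by $\{L^C,e,f,h,e',f',h',G^{\pm\pm}\}$, the quotient $\mathcal{V}^k$ is strongly generated by the images of $L^C,e,f,h,G^{\pm\pm}$.

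Second I would compute the induced OPEs on $\mathcal{V}^k$ by projecting every OPE coefficient in \eqref{eq:limitsmall 1}--\eqref{eq:limitsmall 4} onto $\largeVir{k}{\infty}/\cI_Z$, i.e. by deleting each term carrying a factor of a primed field. The relations \eqref{eq:limitsmall 1}--\eqref{eq:limitsmall 2} contain no primed fields and survive verbatim. In \eqref{eq:limitsmall 3} the terms $:h'G^{\pm\pm}:$, $:x'G^{\pm\mp}:$, $:y'G^{\pm\pm}:$ all vanish, leaving each $G^{\pm\pm}$ primary of weight $3/2$ for $L^C$, which is the first line of \eqref{eq:limitsmall 5}. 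In \eqref{eq:limitsmall 4} the OPEs $G^{++}G^{++}$, $G^{--}G^{--}$, $G^{-+}G^{-+}$, $G^{+-}G^{+-}$, $G^{++}G^{+-}$, $G^{--}G^{-+}$ have right-hand sides built entirely from $:e'e:,\,:f'f:,\dots$ and hence become regular; $G^{++}G^{-+}$ and $G^{--}G^{+-}$ lose their $:h'e:,:h'f:$ terms; and in $G^{++}G^{--}$, $G^{-+}G^{+-}$ the summands $h',\,:h'h':,\,:e'f':,\,\partial h'$ are discarded, leaving the remaining lines of \eqref{eq:limitsmall 5}. Matching the resulting list against the standard presentation of the small $N=4$ superconformal algebra — with $\{e,f,h\}$ the affine $\mathfrak{sl}_2$ at level $-(k+1)$, central charge $-6(k+1)$ (so $c=-9$ at $k=\tfrac12$), and $G^{\pm\pm}$ the four weight-$3/2$ primaries — completes the identification.

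The only genuinely delicate point is the bottom block, the $G^{++}G^{--}$ and $G^{-+}G^{+-}$ OPEs: one must be certain precisely which quadratic normally ordered terms lie in $\cI_Z$ and therefore drop out, and that the surviving first-order pole reproduces exactly $\pm\big(kL^C+\tfrac{k}{2}\partial h\big)$ against the correct small-$N=4$ stress tensor, with the current-bilinear terms accounted for consistently. I expect this reconciliation of the coset field $L^C$ occurring in the $\largeVir{k}{\infty}$ relations with the Virasoro field of the small $N=4$ algebra to be where care is required; the rest is routine verification against the explicit OPEs already assembled in \eqref{eq:limitsmall 1}--\eqref{eq:limitsmall 4}.
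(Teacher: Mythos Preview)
Your proposal is correct and follows essentially the same approach as the paper, which presents the lemma as an immediate consequence of the explicit limiting OPEs \eqref{eq:limitsmall 1}--\eqref{eq:limitsmall 4} computed just before it (the paper's entire proof is the phrase ``From the above calculations, we obtain''). Your write-up is if anything more careful than the paper's, spelling out why $Z$ is central and why the projection amounts to deleting primed-field terms.
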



It is not apparent whether or not $\mathcal{V}^k$ is simple, i.e., $\mathcal{V}^k \cong \smallVir{k}$. The main example we need is the case $k = 1/2$, which was studied by Adamovi\'c \cite{Ad1} using a free field realization. Later, we will see that $\mathcal{V}^{1/2}$ is indeed simple.

\begin{rema} If instead we rescale $e', f', h'$ by a factor of $1/\sqrt{a}$, the limit $\largeVir{k_0}{\infty} = \lim_{a \rightarrow \infty} \largeVir{k_0}{a}$ is still well-defined, but the fields $e', f', h'$ generate a nondegenerate rank $3$ Heisenberg vertex algebra and no longer commute with $\mathcal{V}^k$. In particular, the zero modes of $e', f', h'$ integrate to an $SU(2)$-action on $\mathcal{V}^k$ which will be needed later. 
\end{rema}

\subsection{The small $N=4$ superconformal algebra at central charge $-9$}
In this section we discuss two realizations of the small $N=4$ superconformal algebra at level $k = 1/2$, which has central charge $-9$. The first one is due to Drazen Adamovi\'c \cite{Ad1}, and the second one appeared implicitly in \cite{C}. Note that the affine \VOA{} of $\mathfrak{sl}_2$ then has level $-3/2$ and embeds conformally in the simple small $N=4$ superconformal algebra at central charge $-9$, which we call $Y$ \cite{AKFPP}. We will derive the branching rules.
\begin{theo}\label{thm:smallN4}
As a module for $SU(2) \times V_{-3/2}(\mathfrak{sl}_2)$ the simple small $N=4$ superconformal algebra at central charge $-9$ decomposes as 
\[
Y \cong \bigoplus_{m=0}^\infty \ \rho_{m\omega} \otimes V_{-3/2}(m\omega)
\]
where $V_{-3/2}(m\omega)$ denotes the Weyl module of $V_{-3/2}(\mathfrak{sl}_2)$ of highest-weight $m\omega$, $\rho_{m\omega}$ is the corresponding irreducible highest-weight representation of $SU(2)$ and $\omega$ is the fundamental weight of $\mathfrak{sl}_2$.
\end{theo}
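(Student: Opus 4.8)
```latex
\textbf{Proof proposal.}

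The plan is to realize $Y$ explicitly through a free-field construction and then read off the branching. I would begin with Adamovi\'c's realization \cite{Ad1} of the small $N=4$ superconformal algebra at $c=-9$. The key structural fact, already noted in the excerpt, is that $V_{-3/2}(\mathfrak{sl}_2)$ embeds conformally in $Y$; conformality means that the coset Virasoro $L^C = L - L^{\mathfrak{sl}_2}$ vanishes, so $L = L^{\mathfrak{sl}_2}$ and every weight in $Y$ is accounted for by the affine action. This forces the decomposition of $Y$ as a $V_{-3/2}(\mathfrak{sl}_2)$-module to be a sum of Weyl modules (more precisely, objects in the relevant Kazhdan--Lusztig category), with no room for a nontrivial coset contribution. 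The task is then to identify precisely which highest weights $\lambda$ occur and with what multiplicity space, and to show those multiplicity spaces assemble into the $SU(2)$-representations $\rho_{m\omega}$.

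Next I would bring in the second realization, the one appearing implicitly in \cite{C}, together with the additional $SU(2)$-action: by the Remark following Lemma \ref{lemma:ainftylimit}, rescaling $e',f',h'$ by $1/\sqrt{a}$ produces a genuine $SU(2)$-action on $\mathcal{V}^k$ whose zero modes integrate to the group. The strategy is to use this $SU(2)$ as a commuting symmetry so that $Y$ decomposes into isotypic components $\rho_{m\omega}\otimes M_m$ where each $M_m$ is a $V_{-3/2}(\mathfrak{sl}_2)$-module. One then identifies $M_m$. The natural candidate is $V_{-3/2}(m\omega)$, and I would pin this down by matching highest-weight data: the odd generators $G^{\pm\pm}$ transform in the bifundamental under $SU(2)\times \mathfrak{sl}_2$ (as visible from the $h,h'$ OPEs), so acting with their modes on the vacuum sweeps out exactly the spin-$1/2 \otimes 1/2$ building block, and the normal-ordered products generate the higher $m\omega\otimes m\omega$ pieces. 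The conformal-weight bookkeeping from the $L^C$ OPEs (where $G^{\pm\pm}$ has the coset weight $\tfrac34\frac{-1+2k+2ak}{-1+k+ak}$ specializing appropriately at $k=1/2$) should confirm that no other highest weights appear.

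The cleanest way to nail the multiplicities is a character/qdim argument. Since $L=L^{\mathfrak{sl}_2}$ under the conformal embedding, the graded character of $Y$ is computed entirely by $L^{\mathfrak{sl}_2}_0$, and I would compare $\mathrm{ch}\,Y$ with $\sum_{m\ge 0}\dim(\rho_{m\omega})\,\mathrm{ch}\,V_{-3/2}(m\omega)$. Adamovi\'c's free-field realization gives an independent formula for $\mathrm{ch}\,Y$, and the Weyl-module characters of $V_{-3/2}(\mathfrak{sl}_2)$ are standard; matching these two series term by term forces the stated multiplicities, and simplicity of $Y$ (so that nothing collapses) is what guarantees the sum is exactly this and not a proper submodule.

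The hard part, I expect, is controlling the multiplicity spaces rigorously rather than heuristically: showing that each $M_m$ is the \emph{full} Weyl module $V_{-3/2}(m\omega)$ and not a proper submodule or a larger indecomposable extension. Because the level $-3/2$ is admissible and nongeneric, $V_{-3/2}(\mathfrak{sl}_2)$ need not be semisimple and Weyl modules can be reducible, so one cannot simply invoke semisimplicity of $KL_{-3/2}(\mathfrak{sl}_2)$. Overcoming this will require either exhibiting explicit singular vectors in the free-field model to prove each $M_m$ is generated by a single highest-weight vector of weight $m\omega$ with the correct Shapovalov form, or leveraging the fact that $Y$ is a \emph{simple} vertex algebra to deduce that its isotypic blocks must be the universal (Weyl) objects, invoking the rigidity of the conformal embedding $V_{-3/2}(\mathfrak{sl}_2)\hookrightarrow Y$ established via \cite{AKFPP, C}.
```
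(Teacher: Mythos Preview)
Your proposal has the right ingredients but a genuine gap in the upper-bound direction, and one factual error that obscures the actual difficulty.

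First, the error: the level $-3/2$ for $\mathfrak{sl}_2$ is \emph{generic}, not admissible (here $k+2=1/2$, and admissibility requires $p\geq 2$ in $k+2=p/q$). The paper uses exactly this: every Weyl module $V_{-3/2}(m\omega)$ is simple, so there is no issue of indecomposable extensions or proper submodules. Your worry in the last paragraph is misplaced, and you lose access to the clean statement that $Y$ decomposes as a direct sum of simple Weyl modules.

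Second, the real gap: your character argument is circular. Adamovi\'c's realization embeds $Y$ as a kernel of screenings inside the $\beta\gamma bc$ system, but it does \emph{not} hand you $\mathrm{ch}\,Y$ directly---knowing the character of the kernel is essentially equivalent to knowing the decomposition you are trying to prove. So comparing $\mathrm{ch}\,Y$ with $\sum_m (m+1)\,\mathrm{ch}\,V_{-3/2}(m\omega)$ is not an independent check. What you have from the free-field side is only a \emph{lower bound}: the explicit highest-weight vectors $X_n=\,:b\,\partial b\cdots\partial^{n-1}b:$ witness $\bigoplus_m \rho_{m\omega}\otimes V_{-3/2}(m\omega)\subset Y$.

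The paper closes the argument with a matching \emph{upper bound} that you gesture at but do not use. From \cite{C} one has a simple VOA $\mathcal{W}_2$ with an explicit decomposition over $V_{-3/2}(\mathfrak{sl}_2)\otimes\mathcal{H}$; extending along the lattice VOSA $V_{\sqrt{-1}\mathbb{Z}}$ via \cite{CKLR} produces a VOSA $\widetilde{Y}\cong\bigoplus_m (m+1)\,V_{-3/2}(m\omega)$. The low-weight generators of $\widetilde{Y}$ close into a minimal $W$-superalgebra whose simple quotient is $Y$ (by the uniqueness theorem of \cite{ACKL}), so $Y$ is a quotient of a subalgebra of $\widetilde{Y}$. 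Combining this with the inclusion $\widetilde{Y}\subset Y$ (as $V_{-3/2}(\mathfrak{sl}_2)$-modules, from the explicit $X_n$) forces $\widetilde{Y}\cong Y$. Your proposal needs this second construction used in this specific way, not merely as a source for the $SU(2)$-action.
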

\begin{proof}
Let us recall the construction of Adamovi\'c.
Consider the Wakimoto free field realization of the affine \VOA{} of $\mathfrak{sl}_2$ at level $-3/2$. He then realizes the simple small $N=4$ superconformal algebra as a kernel of screening charges inside an extension of the free field vertex algebra. Let us formulate this problem in terms of a $\beta\gamma$ \VOA{} together with a pair of fermionic $bc$-ghosts. We have the standard operator products
\[
\beta(z)\gamma(w) \sim (z-w)^{-1} \sim b(z)c(w).
\]
The affine vertex operator subalgebra is then generated by
\begin{equation}\nonumber
\begin{split}
e(z) &= \beta(z),\\
h(z) &=-2:\gamma(z)\beta(z): +:b(z)c(z):, \\ 
f(z) &=-:\gamma(z)\gamma(z)\beta(z): -\frac{3}{2}\partial \gamma(z) +:\gamma(z) b(z)c(z):.
 \end{split}
\end{equation}
We note that the conformal weight of $b$ is $3/2$ with respect to the Sugawara vector and the one of $c$ is $-1/2$. 
The four odd dimension $3/2$ fields are 
\begin{equation}\nonumber
\begin{split}
G^+(z) &= b(z),\\
G^-(z) &=-:\gamma(z)b(z): \\
\overline G^+(z)& = -2:\beta(z) \partial c(z): - :\partial\beta(z) c(z): \\
\overline G^-(z)&= - :b (\partial c) c:  - 2 :\beta \gamma \partial c:  - :(\partial \beta) \gamma c: - \frac{3}{2} \partial^2 c,
 \end{split}
\end{equation}

Let us call the \VOSA{} generated by these fields $Y$. Adamovi\'c then proves that $Y$ is simple \cite[Theorem 6.1]{Ad1} and that $Y$ coincides with the kernel of screenings on the $\beta\gamma bc$ \VOSA{} \cite[Corollary 6.2]{Ad1}. 
$Y$ is a module for the affine vertex operator subalgebra. The level of $\mathfrak{sl}_2$ is generic (see e.g. the computation of Section 3.1 of \cite{C}) and thus every Weyl module is simple. Let $\omega$ be the fundamental weight of $\mathfrak{sl}_2$ and we denote the Weyl module of weight $n\omega$ at level $-3/2$ by $V_{-3/2}(n\omega)$. 
Consider $X_n:= :b\partial b \dots \partial^{n-1} b:$ then $X_n$ clearly corresponds to an $\mathfrak{sl}_2$ highest-weight  vector of weight $n\omega$ and conformal weight $n^2/2+n$. Recall also that $SU(2)$ acts as outer automorphism group on $X$ and  since $b$ is a highest-weight vector for the standard representation $\rho_\omega$ the field $X_n$ must be one for $\rho_{n\omega}$. It follows that 
\[
\bigoplus_{n=0}^\infty \rho_{n\omega} \otimes V_{-3/2}(n\omega) \subset Y.
\]
We turn to the second construction of the small $N=4$ superconformal algebra at central charge $c=-9$. This construction immediately follows from \cite[Corollary 5.8]{C} together with \cite[Theorem 4.1]{CKLR}. Denote by $\mathcal H$ the rank one Heisenberg \VOA{} and by $\mathcal F_\mu$ the Fock module of highest-weight $\mu$ and conformal weight $\mu^2/2$. Let $\lambda$ be such that $\lambda^2=-1$. Then the case $p=2$ of \cite[Corollary 5.8]{C} says that there is a simple \VOA{} called $\mathcal W_2$ that satisfies
\[
\mathcal W_2 \cong \bigoplus_{m=0}^\infty V_{-3/2}(m\omega) \otimes \left( \mathcal F_{-m\lambda} \oplus \mathcal F_{(-m+2)\lambda} \oplus \dots \oplus  \mathcal F_{(m-2)\lambda} \oplus \mathcal F_{-m\lambda}\right)
 \]
as $V_{-3/2}(\mathfrak{sl}_2) \otimes \mathcal H$-module. The lattice \VOSA{}
\[
V_{\sqrt{-1}\mathbb Z}\bigoplus_{m\in \mathbb Z} \mathcal F_{m\lambda}
\]
extends $\mathcal H$ to a \VOSA{} and so by Theorem 4.1 of \cite{CKLR} (which follows from \cite[Theorem 3.1]{Li}, see also \cite{DL}) 
\[
\widetilde Y := \bigoplus_{m=0}^\infty\ (m+1) \ V_{-3/2}(m\omega)
\]
extends $V_{-3/2}(\mathfrak{sl}_2)$ to a larger \VOSA.  We thus see that $\widetilde Y\subset Y$ as $V_{-3/2}(\mathfrak{sl}_2)$-module. 
Moreover, the fields in $ V_{-3/2}(0) \oplus 2 V_{-3/2}(\omega)$ corresponding to the top level of the modules
generate a vertex operator subalgebra under operator product algebra. Since the conformal weight of the highest-weight states of $ V_{-3/2}(n\omega)$ is $n^2+n$ they must already close under operator products. They thus form a minimal W-superalgebra and its simple quotient is thus $Y$, i.e. the small $N=4$ superconformal algebra at $c=-9$ by the uniqueness result \cite[Theorem 3.1]{ACKL} (which generalized \cite[Lemma 8.2]{ACL}).

Let us summarize: we have shown firstly that 
$\widetilde Y\subset  Y$ as $V_{-3/2}(\mathfrak{sl}_2)$-module  and secondly $Y$ being a homomorphic image (namely the simple quotient) of a subalgebra of $\widetilde Y$. Both statements together can only be true if $\widetilde Y\cong Y$. 
\end{proof}

\subsection{The large $N=4$ superconformal algebra at central charge $-6$}

We now consider the simple algebra $\largeVir{1/2}{a}$, which has central charge $-6$. By \cite{AKFPP},  $V_{-(a+3)/2}(\mathfrak{sl}_2) \otimes V_{-(a^{-1}+3)/2}(\mathfrak{sl}_2)$ embeds conformally, that is, the Virasoro field $L$ is identified with the sum of the Sugawara vectors for the affine $\mathfrak{sl}_2$ subalgebras in $\largeVir{1/2}{a}$. Moreover, by Corollary \ref{cor:deformfam} in the case $k_0 = 1/2$, we see that $\largeVir{1/2}{a}$ is a deformable family of \VOSAs, that is, there exists a ring of the form $F_A$ where $A$ is at most countable, such that $F_A \otimes_F \largeVir{1/2}{a}$ is a free $F_A$-module. 

Let now $a$ be generic. The conformal weight of the top level subspace of the Weyl module $V_{-(a+3)/2}(m\omega)$ is $m(m+2)/(2-2a)$, so the conformal weight of  
$V_{-(a+3)/2}(m\omega)\otimes V_{-(a^{-1}+3)/2}(m'\omega)$ is a half-integer for generic $a$ if and only if $m=m'$ and in this case it equals $m(m+2)/2$. Since $\largeVir{1/2}{a}$ has finite-dimensional weight spaces and as such must be a direct sum of Weyl modules for generic $a$, by our conformal weight consideration the only possibility is
\[
\largeVir{1/2}{a} \cong \bigoplus_{m=0}^\infty N_m V_{-(a+3)/2}(m\omega) \otimes V_{-(a^{-1}+3)/2}(m\omega),
\]
with some multiplicities $N_m$. On the other hand, the graded character coincides with the one in the large $a$ limit. It follows from Theorem \ref{thm:smallN4} that we have $N_m=1$ for all $m$.
\begin{corol}\label{cor:larN=4}
As a $V_{-(a+3)/2}(\mathfrak{sl}_2) \otimes V_{-(a^{-1}+3)/2}(\mathfrak{sl}_2)$-module
\[
\largeVir{1/2}{a} \cong \bigoplus_{m=0}^\infty V_{-(a+3)/2}(m\omega) \otimes V_{-(a^{-1}+3)/2}(m\omega).
\]
\end{corol}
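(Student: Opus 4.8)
The plan is to reconstruct the decomposition in two stages: first determine \emph{which} tensor products of Weyl modules can occur, using the grading by conformal weight, and then fix the multiplicities by a character comparison with the $a\to\infty$ limit. Throughout I take $a$ generic. By \cite{AKFPP} the subalgebra $V_{-(a+3)/2}(\mathfrak{sl}_2)\otimes V_{-(a^{-1}+3)/2}(\mathfrak{sl}_2)$ is conformally embedded, so the Virasoro field $L$ of $\largeVir{1/2}{a}$ is the sum of the two Sugawara vectors and the $L_0$-grading of $\largeVir{1/2}{a}$ coincides with the total conformal grading of the affine subalgebra. At generic level both affine factors lie in a semisimple category in which every Weyl module is simple, and since $\largeVir{1/2}{a}$ has finite-dimensional weight spaces it decomposes as a direct sum of tensor products $V_{-(a+3)/2}(m\omega)\otimes V_{-(a^{-1}+3)/2}(m'\omega)$ with finite multiplicities.

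To cut down the allowed pairs $(m,m')$ I would compute the conformal weight of the top of each such summand. By the Sugawara formula the top of $V_{-(a+3)/2}(m\omega)$ sits in weight $m(m+2)/(2-2a)$, and likewise for the right factor, so the total top weight of $V_{-(a+3)/2}(m\omega)\otimes V_{-(a^{-1}+3)/2}(m'\omega)$ is $\bigl(m(m+2)-a\,m'(m'+2)\bigr)/(2-2a)$. A homogeneous subspace of $\largeVir{1/2}{a}$ carries a fixed half-integer conformal weight that cannot depend on $a$, so for transcendental $a$ the $a$-dependence of this expression must cancel; this happens exactly when $m(m+2)=m'(m'+2)$, i.e.\ when $m=m'$, in which case the weight equals $m(m+2)/2$. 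Hence only the diagonal summands survive and $\largeVir{1/2}{a}\cong\bigoplus_{m\ge 0}N_m\,V_{-(a+3)/2}(m\omega)\otimes V_{-(a^{-1}+3)/2}(m\omega)$ for some integers $N_m\ge 0$.

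It remains to show $N_m=1$, which I regard as the crux. By Corollary \ref{cor:deformfam}, $\largeVir{1/2}{a}$ is a deformable family, so its graded character is independent of $a$ and equals that of the limit $\largeVir{1/2}{\infty}$; moreover, since the diagonal summands have distinct minimal conformal weights $m(m+2)/2$, the multiplicities $N_m$ are determined by this graded character. I would then read the character off in the limit. By Lemma \ref{lemma:ainftylimit}, together with the $1/\sqrt{a}$ rescaling that promotes the zero modes of $e',f',h'$ to an $SU(2)$-action, one has $\largeVir{1/2}{\infty}\cong Z\otimes\smallVir{1/2}$ with $Z$ a rank-three commutative Heisenberg algebra, and Theorem \ref{thm:smallN4} gives $\smallVir{1/2}\cong\bigoplus_{m\ge 0}\rho_{m\omega}\otimes V_{-3/2}(m\omega)$. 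As a graded vector space $Z$ is isomorphic to $U(t^{-1}\mathfrak{sl}_2[t^{-1}])$, so $Z\otimes\rho_{m\omega}$ has the same graded character as the left Weyl module $V_{-(a+3)/2}(m\omega)$ (whose right-hand level $-(a^{-1}+3)/2$ degenerates to $-3/2$). Thus the limit decomposition $\bigoplus_m (Z\otimes\rho_{m\omega})\otimes V_{-3/2}(m\omega)$ matches the diagonal one term by term with multiplicity one, forcing $N_m=1$ for all $m$, which is the claim.

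The main obstacle is precisely this last step: the conformal-weight computation is elementary and only produces the diagonal shape, whereas pinning the multiplicities genuinely requires the deformable-family structure so that the character can be transported to $a=\infty$, where Theorem \ref{thm:smallN4} supplies the answer. A secondary point I would check carefully is that $a$ is generic enough that both affine levels are non-degenerate, so that the Weyl modules are simple and the weight-space decomposition into Weyl modules is legitimate to begin with.
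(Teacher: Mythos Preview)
Your argument is essentially identical to the paper's: conformal embedding via \cite{AKFPP}, the Sugawara conformal-weight computation forcing the diagonal form $\bigoplus_m N_m\,V_{k_1}(m\omega)\otimes V_{k_2}(m\omega)$, constancy of the graded character from Corollary~\ref{cor:deformfam}, and comparison with the $a\to\infty$ limit together with Theorem~\ref{thm:smallN4} to conclude $N_m=1$.

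One point of care: you write that Lemma~\ref{lemma:ainftylimit} (with the $1/\sqrt{a}$ rescaling) gives $\largeVir{1/2}{\infty}\cong Z\otimes\smallVir{1/2}$, but that lemma only produces the exact sequence with quotient $\mathcal{V}^{1/2}$, and the paper explicitly remarks just afterward that ``it is not apparent whether or not $\mathcal{V}^k$ is simple.'' In fact $\mathcal{V}^{1/2}\cong\smallVir{1/2}$ is deduced \emph{from} Corollary~\ref{cor:larN=4} in the very next corollary, so invoking it here is circular. The paper sidesteps this by never naming $\mathcal{V}^{1/2}$ in its argument and running the comparison purely at the level of graded characters: each diagonal summand $V_{k_1}(m\omega)\otimes V_{k_2}(m\omega)$ already has $a$-independent character, and in the limit this coincides with $\ch[Z]\cdot\chi_m(z)\cdot\ch[V_{-3/2}(m\omega)]$, which is exactly what Theorem~\ref{thm:smallN4} supplies. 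Phrase your last paragraph that way and the circularity disappears.
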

Let us write the character of $\largeVir{1/2}{a}$ for generic $a$ including Jacobi variables $y, z$, It is
\[
\text{ch}[\largeVir{1/2}{a}](y, z, q) = \sum_{m=0}^\infty \text{ch}[V_{-(a+3)/2}(m\omega)](q, y)\   \text{ch}[V_{-(a^{-1}+3)/2}(m\omega)](q, z)
\]
with
\[
\ch[V_{k}(m\omega)](z, q) = \frac{\left(z^{m+1}-z^{-(m+1)}\right)q^{\frac{m(m+2)-12k}{k+2}+\frac{1}{8}} }{\Pi(z)}
\]
with Weyl denominator 
\[
\Pi(z) = q^{\frac{1}{8}}\left(z-z^{-1}\right) \prod_{n=1}^\infty (1-z^2q^n)(1-q^n)(1-z^{-2}q^n).  
\]
 The Jacobi theta function of the lattice $\mathbb Z$ is 
\[
\theta_{\mathbb Z}(z, q) = \sum_{m\in\ZZ} q^{\frac{m^2}{2}}z^m.
\]
Let $k_1=-(a+3)/2$ and $k_2=-(a^{-1}+3)/2$ and note that 
\[
\frac{1}{k_1+2} + \frac{1}{k_2+2} = \frac{2}{1-a} +  \frac{2}{1-a^{-1}} =2.
\]
We now compute
\begin{equation}\nonumber
\begin{split}
\ch[\largeVir{1/2}{a}](z, w, q) &= \sum_{m=0}^\infty  \ch[V_{k_1}(m)](q, z)\ch[V_{k_2}(m)](q, w)\\
&=  \sum\limits_{m=0}^\infty\frac{ q^{\frac{(m+1)^2}{2}}\left((zw)^{m+1}+(zw)^{-(m+1)} -(zw^{-1})^{m+1}-(z^{-1}w)^{m+1}\right) }{\Pi(z) \Pi(w)} \\
&= \frac{\theta_{\mathbb Z}(zw, q) - \theta_{\mathbb Z}(zw^{-1}, q) }{\Pi(z)\Pi(w)}.
\end{split}
\end{equation}

\begin{corol} In the case $k = 1/2$, the algebra $\mathcal{V}^{1/2}$ in Lemma \ref{lemma:ainftylimit} is simple. We therefore obtain an exact sequence of vertex algebras
$$ 0 \rightarrow \cI_{Z}  \rightarrow \largeVir{1/2}{\infty} \rightarrow \smallVir{1/2} \rightarrow 0.$$ Here $\cI_Z \subseteq \largeVir{1/2}{\infty}$ is the ideal generated by the rank $3$ commutative Heisenberg algebra with generators $e', f', h'$.
\end{corol}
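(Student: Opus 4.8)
The plan is to show that $\mathcal{V}^{1/2}$ has the same graded character as its simple quotient, which forces it to be simple. First I would record that, by Lemma \ref{lemma:ainftylimit}, $\mathcal{V}^{1/2}$ is strongly generated by $L^C, e, f, h, G^{\pm\pm}$ subject to the operator product relations of the small $N=4$ superconformal algebra at level $k=1/2$. Hence $\mathcal{V}^{1/2}$ is a quotient of the universal small $N=4$ algebra $U$ at this central charge, and the simple quotient $\smallVir{1/2}=Y$ of Theorem \ref{thm:smallN4} is $U$ modulo its unique maximal graded ideal. As that maximal ideal contains the kernel of $U\twoheadrightarrow\mathcal{V}^{1/2}$, I obtain a surjection $\mathcal{V}^{1/2}\twoheadrightarrow Y$ of graded vertex algebras. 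It therefore suffices to prove the character identity $\ch[\mathcal{V}^{1/2}]=\ch[Y]$: since all graded pieces are finite dimensional, a degree-preserving surjection between graded spaces with equal characters is an isomorphism, so $\mathcal{V}^{1/2}\cong Y$ is simple.

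The character of $\mathcal{V}^{1/2}$ I would extract from the limit $\largeVir{1/2}{\infty}$. By Corollary \ref{cor:deformfam} with $k_0=1/2$ the family $\largeVir{1/2}{a}$ is deformable, so its graded character is independent of $a$ and equals that of $\largeVir{1/2}{\infty}$. Using the decomposition of Corollary \ref{cor:larN=4}, $\ch[\largeVir{1/2}{\infty}]=\sum_{m\geq 0}\ch[V_{k_1}(m\omega)]\,\ch[V_{k_2}(m\omega)]$ with $k_1=-(a+3)/2$ and $k_2=-(a^{-1}+3)/2$. As $a\to\infty$ the second factor converges to $\ch[V_{-3/2}(m\omega)]$, while the first factor, whose level diverges, degenerates after the rescaling of $e',f',h'$ into the character of the rank-three Heisenberg module $\rho_{m\omega}\otimes Z$: each generic Weyl module has the PBW graded dimension of a free module over the negative current modes, and in the limit these modes become the free generators of $Z$. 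Thus $\largeVir{1/2}{\infty}\cong Z\otimes\bigoplus_{m\geq 0}\rho_{m\omega}\otimes V_{-3/2}(m\omega)=Z\otimes Y$ as free $Z$-modules, where the last identification is Theorem \ref{thm:smallN4}.

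With this free structure in hand I would finish as follows. Since $Z$ is central, the ideal $\cI_Z$ it generates is the $Z$-submodule $Z_+\cdot\largeVir{1/2}{\infty}$, where $Z_+$ is the augmentation ideal; under the isomorphism $\largeVir{1/2}{\infty}\cong Z\otimes Y$ this is exactly $Z_+\otimes Y$. Therefore $\mathcal{V}^{1/2}=\largeVir{1/2}{\infty}/\cI_Z\cong (Z/Z_+)\otimes Y\cong Y$ as graded vector spaces, which gives the desired identity $\ch[\mathcal{V}^{1/2}]=\ch[Y]$ and hence $\mathcal{V}^{1/2}\cong\smallVir{1/2}$. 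Substituting this identification into the exact sequence of Lemma \ref{lemma:ainftylimit} yields the claimed sequence $0\to\cI_Z\to\largeVir{1/2}{\infty}\to\smallVir{1/2}\to 0$.

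The main obstacle is the second step: making rigorous the assertion that, after the limit, $\largeVir{1/2}{\infty}$ is a free module over $Z$ with fiber $Y$. The delicate point is that $\largeVir{1/2}{a}$ is not freely generated at the special level $k=1/2$, so freeness over $Z$ cannot be read off from a naive strong-generation statement; it is precisely the deformable-family framework of Corollary \ref{cor:deformfam}, together with the $a\to\infty$ behavior of the generic Weyl modules, that guarantees the character factorizes as $\ch[Z]\cdot\ch[Y]$ and that the two rescalings of $e',f',h'$, namely the commutative one defining $Z$ and $\cI_Z$ and the nondegenerate one producing the $SU(2)$-action, yield the same graded dimensions.
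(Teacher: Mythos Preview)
Your argument is correct and follows the same route as the paper: the proof there simply cites Lemma \ref{lemma:ainftylimit}, Theorem \ref{thm:smallN4}, and Corollary \ref{cor:larN=4}, and your write-up unpacks exactly how these three inputs combine---the surjection $\mathcal{V}^{1/2}\twoheadrightarrow Y$ from the OPE identification, the character of $\largeVir{1/2}{\infty}$ from the deformable-family decomposition, and the matching with $\ch[Z]\cdot\ch[Y]$ via the limit of Weyl modules. Your explicit treatment of the freeness over $Z$ (that each $V_{k_2}(m\omega)$ is free over the lower current algebra, which becomes $Z$ in the rescaled limit) is precisely the step the paper leaves implicit in the word ``immediate''.
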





\begin{proof} This is immediate from Lemma \ref{lemma:ainftylimit}, Theorem \ref{thm:smallN4}, and Corollary \ref{cor:larN=4}.
\end{proof}

\begin{corol}
The character of $\largeVir{1/2}{a}$ converges to the meromorphic Jacobi form
\[
\text{ch}[\largeVir{1/2}{a}](q, w, z) = \frac{\theta_{\mathbb Z}(zw, q) - \theta_{\mathbb Z}(zw^{-1}, q) }{\Pi(z)\Pi(w)}
\]
for $|z|, |w| < |q|^{\pm 1}$.
\end{corol}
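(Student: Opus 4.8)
The plan is to promote the formal $q$-series identity obtained in the computation just before the statement into a genuine assertion of analytic convergence on the indicated domain, and to identify the limit with the displayed closed form. By Corollary \ref{cor:larN=4} the graded character is a well-defined formal series (each Weyl module has finite-dimensional weight spaces), and that computation already rewrites it as
\[
\ch[\largeVir{1/2}{a}](z,w,q)=\frac{1}{\Pi(z)\,\Pi(w)}\sum_{m=0}^{\infty}q^{(m+1)^{2}/2}\Big((zw)^{m+1}+(zw)^{-(m+1)}-(zw^{-1})^{m+1}-(z^{-1}w)^{m+1}\Big),
\]
the factorization being valid term by term since $\Pi(z)$ and $\Pi(w)$ are independent of $m$. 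Everything then reduces to controlling the two factors separately and justifying that they may be combined.

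The cleanest route is to keep the per-module form, in which each summand is $\frac{\chi_{m}(z)\chi_{m}(w)\,q^{(m+1)^{2}/2-1/4}}{\prod_{n\geq1}(1-q^{n})^{2}(1-z^{2}q^{n})(1-z^{-2}q^{n})(1-w^{2}q^{n})(1-w^{-2}q^{n})}$, where $\chi_{m}(z)=z^{m}+z^{m-2}+\cdots+z^{-m}$ is a finite Weyl-character polynomial; here the multiplicities in $z$ and $w$ are finite, so no expansion of $1/(z-z^{-1})$ is required. For $|q|<1$ the defining products converge absolutely ($\sum_{n}|z^{\pm2}q^{n}|<\infty$), and the geometric expansions of their reciprocals converge on an annular region of the type described in the statement. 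There the sum over $m$ converges absolutely and locally uniformly, since the quadratic exponent $(m+1)^{2}/2$ makes $q^{(m+1)^{2}/2}$ decay faster than the at-most-geometric growth of $\chi_{m}(z)\chi_{m}(w)$ (bounded by $(m+1)^{2}\max(|z|,|z|^{-1})^{m}\max(|w|,|w|^{-1})^{m}$).

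It remains to identify the holomorphic limit with $\frac{\theta_{\mathbb{Z}}(zw,q)-\theta_{\mathbb{Z}}(zw^{-1},q)}{\Pi(z)\Pi(w)}$. Resumming the four geometric progressions in the factored numerator against the defining series $\theta_{\mathbb{Z}}(u,q)=\sum_{m\in\mathbb{Z}}q^{m^{2}/2}u^{m}$, and noting that the constant terms of the two $\theta$-series cancel, gives exactly $\theta_{\mathbb{Z}}(zw,q)-\theta_{\mathbb{Z}}(zw^{-1},q)$. The apparent poles of $1/\Pi(z)$ at $z=\pm1$ (coming from the factor $z-z^{-1}$) are removable, because this theta-difference vanishes there by the evenness $\theta_{\mathbb{Z}}(u,q)=\theta_{\mathbb{Z}}(u^{-1},q)$, and likewise at $w=\pm1$; hence the two forms agree as holomorphic functions across the full annulus, and the common value is visibly a meromorphic Jacobi form, being a ratio of theta-type functions.

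The step that needs genuine care, and which I expect to be the main obstacle, is legitimizing the passage from the graded character to this closed form: the character is intrinsically a double series, a sum over the module index $m$ whose terms are themselves $q$-series produced by expanding the inverse Weyl denominators, so one must establish absolute convergence of the whole double series on the stated annulus in order to apply Fubini and perform the resummation. The controlling estimate is again the Gaussian domination described above, uniform on compact subsets of the annulus; supplying this majorant both justifies the interchange of summations and pins down precisely the chamber in which the graded character equals the meromorphic Jacobi form.
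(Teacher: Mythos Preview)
Your proposal is correct and proceeds along the same lines as the paper: the paper gives no separate proof of this corollary, treating it as an immediate consequence of the formal computation displayed just above the statement (the rewriting of $\sum_{m\geq 0}\ch[V_{k_1}(m)]\ch[V_{k_2}(m)]$ as a theta-difference over $\Pi(z)\Pi(w)$). You have simply made explicit the standard convergence argument---Gaussian decay $q^{(m+1)^2/2}$ dominating the polynomial growth of the finite Weyl characters $\chi_m$, together with absolute convergence of the infinite products for $|q|<1$---that the paper leaves to the reader.

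One small point of care: your sentence ``the two forms agree as holomorphic functions across the full annulus'' overstates things slightly. The geometric expansions of the factors $(1-z^{\pm 2}q^n)^{-1}$ that you invoke converge only for $|q|^{1/2}<|z|<|q|^{-1/2}$; on the wider annulus $|q|<|z|<|q|^{-1}$ named in the statement the denominator $\Pi(z)\Pi(w)$ has genuine zeros (at $z^2=q^{\pm 1}$, etc.), which is precisely why the corollary calls the limit a \emph{meromorphic} Jacobi form. Your argument therefore establishes equality of holomorphic functions on the smaller annulus, from which the meromorphic identity on the larger one follows by analytic continuation. This does not affect the correctness of your approach, only the phrasing of the conclusion.
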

We note that the limit $z, w \rightarrow 1$ is by L'H\^opital's rule
\[
\lim_{z, w \rightarrow 1}\text{ch}[\largeVir{1/2}{a}](q, w, z) = \frac{1}{2}  \frac{\theta_{\mathbb Z}''(q)}{\eta(q)^6}
\]
with $\eta(q)$ the usual Dedekind eta-function and $\theta''(q) := \frac{d^2}{dz^2} \theta(z, q)\vert_{z=1}$. 
In other words the specialized character is a holomorphic modular form. This is a property of quasi-lisse \VOAs{} \cite{AK}. We wonder:
\begin{ques}
Is 
$\largeVir{1/2}{a}$  a deformable family of quasi-lisse \VOSAs?
\end{ques}
We remark that the super character is obtained from the character by replacing $z$ by $-z$, 
\[
\text{sch}[\largeVir{1/2}{a}](q, w, z) = \text{ch}[\largeVir{1/2}{a}](q, w, -z). 
\]

\subsection{Quantum Hamiltonian Reductions}

We turn to quantum Hamiltonian reductions, see e.g. \cite{Ar} as a reference. We are interested in applying the quantum Hamiltonian reduction to the affine sub algebras. For this consider two $bc$-ghost systems $bc$ and $b'c'$ and define the fields
\[
d(z) := b(z)e(z) +b(z), \qquad d'(z) := b'(z)e'(z) +b'(z)
\]
and denote their zero-modes by $d_0$ and $d_0'$. 
Let 
\[
W^k(a):= H_{d_0}(V(k, a)\otimes bc), \qquad X^k(a):= H_{d'_0}(W^k(a)\otimes b'c')
\]
and let
\[
W(a):= H_{d_0}(\largeVir{1/2}{a}\otimes bc), \qquad X(a):= H_{d'_0}(W(a)\otimes b'c').
\]
Let us first compute the characters of $W(a)$ and $X(a)$ first. They are obtained by taking the character of $\largeVir{1/2}{a}$ times the supercharacter of the ghosts times $q^{k/4}$ and then taking the limit of the Jacobi variable(s) to $q^{-1/2}$. The supercharacter of a pair of ghosts is
\[
\text{sch}[bc](q, z) = q^{\frac{1}{12}} \prod_{n=1}^\infty (1-z^2q^n)(1-z^{-2}q^{n-1})
\]
\begin{equation}
\begin{split}
\ch[W(a)](q, w) &= \lim_{z\rightarrow q^{-1/2}} \text{ch}[\largeVir{1/2}{a}](q, w, z)\text{sch}[bc](q, z)q^{k_1/4} \\
&= q^{-\frac{c}{24}+\frac{1}{6}} \frac{\theta_{\mathbb Z}(q^{-1/2}w, q) - \theta_{\mathbb Z}(q^{-1/2}w^{-1}, q) }{\eta(q)\Pi(w)}
\end{split}
\end{equation}
with $c= 1+3a^{-1}$ the central charge of the affine \VOSA{} $V_{k_2}(\mathfrak{osp}(1|2))$ ($k_2=-(a+3)/2$).
This is exactly the character of $V_{k_2}(\mathfrak{osp}(1|2))$ for generic $k_2$. 
Similarly
\begin{equation}
\begin{split}
\ch[X(a)](q) &= \lim_{z, w\rightarrow q^{-1/2}} \text{ch}[\largeVir{1/2}{a}](q, w, z)\text{sch}[bc](q, z)\text{sch}[bc](q, w)q^{k_1/4+k_2/4} \\
&= q^{-\frac{c}{24}+\frac{1}{12}} \frac{ \theta_{\mathbb Z}(q^{-1}, q) - \theta_{\mathbb Z}(1, q)}{\eta(q)^2}
\end{split}
\end{equation}
with $c=1+3a -6k_2 -2= 8=3(a+a^{-1})$ and this is exactly the character of the $N=1$ superconformal algebra at central charge $3/2 + 3(a+2+a^{-1})$ times a free fermion \VOSA. Note that this conclusion is obvious, since the character of the $N=1$ superconformal algebra can be obtained from the one of 
$V_{k}(\mathfrak{osp}(1|2))$ via Euler-Poincar\'e principle and the ghosts for this type of quantum Hamiltonian reduction do not only contain the $bc$-ghosts but also a pair of $\beta\gamma$ bosonic ghosts and a free fermion so that one sees that this Euler-Poincar\'e character differs from the one of $X(a)$ by a free fermion character. 
\begin{theo}\label{thm:red}
$W(a)$ is isomorphic to $V_{k_2}(\mathfrak{osp}(1|2))$ ($k_2=-(a+3)/2$)  and $X(a)$ is isomorphic to the $N=1$ superconformal algebra at central charge $3/2 + 3(a+2+a^{-1})$ times a free fermion \VOSA. 
\end{theo}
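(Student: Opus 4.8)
The plan is to prove both isomorphisms by exhibiting explicit vertex superalgebra homomorphisms from the universal target algebras into $W(a)$ and $X(a)$, built from the strong generators that survive the Drinfeld--Sokolov reductions, and then promoting these maps to isomorphisms using the character identities already computed above together with simplicity of the (universal) target for generic $a$. Throughout I would work at generic $a$; since $\largeVir{1/2}{a}$ is a deformable family (Corollary \ref{cor:deformfam}) and the ghost factors $bc$, $b'c'$ are fixed, the reductions $W(a)$ and $X(a)$ are again deformable families, so the generic statement determines the whole family and the computed graded characters are the characters of the families.

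For $W(a)$, note that $d = be + b$ reduces the level $-(a^{-1}+3)/2$ current algebra $\langle e,f,h\rangle$, while the commuting level $-(a+3)/2$ algebra $\langle e',f',h'\rangle$ is untouched and descends to an even affine $\mathfrak{sl}_2$ in $W(a)$. Applying the reduction functor to Corollary \ref{cor:larN=4} realizes $W(a) = \bigoplus_{m\ge 0} V_{-(a+3)/2}(m\omega)\otimes H_{DS}\big(V_{-(a^{-1}+3)/2}(m\omega)\big)$, which exhibits the surviving affine $\mathfrak{sl}_2$ and the $\Vir$ factor from the reduction. First I would write down the $d_0$-closed representatives of the five surviving generators: the even currents $e', f', h'$, and the two odd fields descending from the $G^{\pm\pm}$ multiplet, which I expect to be represented by the lowest $h$-weight components $G^{+-}, G^{--}$ up to ghost corrections. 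These transform as the adjoint plus a doublet under $\langle e',f',h'\rangle$, matching the even/odd decomposition of $\mathfrak{osp}(1|2)$. The key step is to check that their odd--odd operator products close onto the even currents with the $\mathfrak{osp}(1|2)$ structure constants at level $-(a+3)/2$: in each of the $G^{\pm\pm}\times G^{\pm\pm}$ OPEs recorded above the quadratic expressions such as $:e'e:$, $:f'f:$, $:e'f:$ collapse under the reduction, where the constraint effectively sets $e$ to a constant, leaving precisely the bilinears in $e',f',h'$ and super-current terms required. This produces a nonzero map $V^{-(a+3)/2}(\mathfrak{osp}(1|2)) \to W(a)$; for generic $a$ the source is simple, hence the map is injective, and the character identity forces equality of graded dimensions, so the map is onto and $W(a) \cong V_{k_2}(\mathfrak{osp}(1|2))$ with $k_2 = -(a+3)/2$.

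For $X(a)$, I would apply the second reduction $H_{d'_0}(-\otimes b'c')$, with $d' = b'e' + b'$, to the even $\mathfrak{sl}_2 = \langle e',f',h'\rangle$ of the $\mathfrak{osp}(1|2)$ just produced. Under this ordinary $\mathfrak{sl}_2$ reduction the even currents become a Virasoro field $L$ of central charge $3/2 + 3(a+2+a^{-1})$, and the odd doublet descends to a single weight-$3/2$ odd field $G$, so that $(L,G)$ satisfy the $N=1$ super-Virasoro relations. The extra factor $F(1)$ arises because the reduction here uses only the fermionic $b'c'$ pair, whereas the genuine super-principal reduction of $\mathfrak{osp}(1|2)$ to $\sVir_{N=1}$ carries additional $\beta\gamma$ and fermionic ghosts; the uncompensated odd degree of freedom decouples as a rank-one free fermion, exactly as the character computation records. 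I would make the fields $L$, $G$ and the decoupled fermion explicit, verify the OPEs give $\sVir_{N=1,c}\otimes F(1)$, and conclude by simplicity of the universal target for generic $a$ together with the character match.

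The main obstacle is the operator-product verification in the $W(a)$ step. The delicate point is the choice of $d_0$-closed representatives for the reduced odd generators: the naive lowest-weight components of $G^{\pm\pm}$ are not BRST-closed and must be corrected by ghost-dependent terms before their products close, and one must track how the normally ordered quadratics in the $G^{\pm\pm}\times G^{\pm\pm}$ OPEs pass through the cohomology. A secondary technical point, needed to justify moving between generic $a$ and the whole deformable family, is confirming that $H_{d_0}$ (and $H_{d'_0}$) is exact on the family and preserves freeness over the base ring, so that $W(a)$ and $X(a)$ have the asserted characters as families rather than only at generic points.
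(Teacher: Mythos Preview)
Your overall strategy---construct explicit $d_0$- and $d_0'$-closed generators, verify they satisfy the target OPEs, then use simplicity of the universal target for generic $a$ together with the character identities to upgrade the embedding to an isomorphism---is exactly the paper's approach. The paper packages the embedding step as a separate lemma (for generic $k,a$, the reductions $W^k(a)$ and $X^k(a)$ contain the desired subalgebras) and then invokes the character computation at $k=1/2$ to conclude.

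One concrete correction: your guess for the surviving odd generators in $W(a)$ is off. With $d = be + b$, the $d_0$-closed components of the $G^{\pm\pm}$ multiplet are the \emph{highest} $h$-weight fields $G^{++}$ and $G^{-+}$ (since $e$ acts trivially on them), not the lowest-weight ones $G^{+-}, G^{--}$. The paper takes $x' = \frac{a+1}{\sqrt{2a}}[G^{++}]$ and $y' = -\frac{a+1}{\sqrt{2a}}[G^{-+}]$; no ghost corrections are needed for these, and the $\mathfrak{osp}(1|2)$ relations drop out immediately once one uses that $[-e]=[1]$ in cohomology (from $d_0(c)=e+1$). Similarly, in the second reduction the free fermion is not a residual ghost degree of freedom but simply $[x']$ itself: since $x'_{(0)}x' = -e'$ and $[-e']=[1]$ after the second reduction, the class $[x']$ satisfies the free-fermion OPE directly. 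The super-Virasoro generator $[\psi]$ and the commuting Virasoro $[L']$ then require genuine (and somewhat lengthy) ghost-corrected representatives, which is where the real work of the lemma lies.
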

The Theorem follows directly from above character computation together with the next Lemma. But firstly, the $V_{k}(\mathfrak{osp}(1|2))$ is strongly generated by even fields $e', h', f'$ and odd fields $x', y'$ with OPEs
\begin{equation}\nonumber
\begin{split}
e'(z)f'(w) &\sim k(z-w)^{-2} + h'(w)(z-w)^{-1}, \qquad h'(z)h'(w) \sim 2k(z-w)^{-2}, \\
h'(z)e'(w) &\sim  2e'(w)(z-w)^{-1}, \qquad h'(z)f'(w) \sim  -2f'(w)(z-w)^{-1},\\
h'(z)x'(w) &\sim  x'(w)(z-w)^{-1}, \qquad h'(z)y'(w) \sim  -y'(w)(z-w)^{-1},\\
e'(z) y'(w) &\sim x'(w)(z-w)^{-1},\qquad f'(z) x'(w) \sim y'(w)(z-w)^{-1},\\
x'(z)y'(w) &\sim k(z-w)^{-2} + \frac{h'(w)}{2}(z-w)^{-1}, \qquad x'(z)x'(w) \sim -e'(w)(z-w), \\  y'(z)y'(w) &\sim f'(w)(z-w).\\
\end{split}
\end{equation}

\begin{lemma}
Let $k, a$ be generic, then $W^k(a)$ contains $V_{-((a+1)k+1)}(\mathfrak{osp}(1|2))$ as subalgebra 
and $X^k(a)$ contains the $N=1$ superconformal algebra at $$c=\frac{3 (1 + 2 k + 2 a k) (-1 + 4 k + 4 a k)}{2 (-1 + 2 k + 2 a k)},$$ as well as a free fermion as subalgebra.
\end{lemma}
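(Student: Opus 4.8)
The plan is to recognize both $W^k(a)$ and $X^k(a)$ as Drinfeld--Sokolov reductions of the two commuting affine $\mathfrak{sl}_2$ subalgebras of $V(k,a)$, and to produce the asserted generators as explicit cohomology classes rather than proving any isomorphism. The guiding observation is that $\{e,f,h\}$ and $\{e',f',h'\}$ commute, so any field built only from the primed currents has regular operator product with $e$ and with the $bc$ ghosts; such a field is automatically $d_0$-closed and descends to $W^k(a)$ with its operator products unchanged. In particular the primed $\mathfrak{sl}_2$ survives at its original level, and since $h'(z)h'(w)\sim -2((a+1)k+1)(z-w)^{-2}$ is untouched, it sits at level $-((a+1)k+1)$, exactly the even part of the claimed $\mathfrak{osp}(1|2)$.

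\textbf{The $\mathfrak{osp}(1|2)$ generators.} The first step is to supply the odd currents $x',y'$. Because $e(z)G^{++}(w)$ and $e(z)G^{-+}(w)$ are regular --- these are the $h$-highest-weight members of the $G^{\pm\pm}$ quadruple --- the same factorization argument gives $d_0 G^{++}=0=d_0 G^{-+}$, so both define classes in $W^k(a)$; under the shifted conformal grading of the reduction (with $e$ of weight $0$, so that $L\mapsto L^C+\tfrac12\partial h$) their weights drop from $3/2$ to $3/2-1/2=1$, and they carry $h'$-charge $\pm1$, precisely as required. I would then set $x'\propto G^{++}$, $y'\propto G^{-+}$ and compute their operator products in cohomology. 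The mechanism is that $e+1$ is $d_0$-exact, so $e$ may be replaced by its constraint value $-1$; since $e$ commutes with every primed current this substitution carries no normal-ordering correction, collapsing $:e'e:\mapsto\text{const}\cdot e'$, $:f'e:\mapsto\text{const}\cdot f'$, $:h'e:\mapsto\text{const}\cdot h'$ and $\partial e\mapsto 0$. Applied to the relevant OPEs of the $G$'s this yields, after the rescaling fixed by the prefactors $\tfrac{2a}{(a+1)^2}$,
\[
x'(z)x'(w)\sim -e'(w)(z-w)^{-1},\qquad y'(z)y'(w)\sim f'(w)(z-w)^{-1},
\]
\[
x'(z)y'(w)\sim -((a+1)k+1)(z-w)^{-2}+\tfrac12 h'(w)(z-w)^{-1},
\]
which are the defining relations of $V_{-((a+1)k+1)}(\mathfrak{osp}(1|2))$. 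Matching the coefficients also verifies the normalization, so the five fields close and exhibit the claimed subalgebra.

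\textbf{The $N=1$ super-Virasoro and free fermion.} Having realized $V_{-((a+1)k+1)}(\mathfrak{osp}(1|2))\subseteq W^k(a)$, I would obtain the generators of $X^k(a)$ as the further reduction along the even nilpotent $e'$ using the ghost pair $b'c'$ in $d'=b'(e'+1)$. The even $\mathfrak{sl}_2$ reduces in the standard way: $f'$ becomes a Virasoro field $L$ (with $h'$ cohomologous to a background-charge derivative), whose central charge is the stated $N=1$ value together with the $c=\tfrac12$ of one fermion. The odd currents, of $e'$-charge $\pm1$, reduce to a weight-$\tfrac12$ field $\psi$ from $x'$ and a weight-$\tfrac32$ field $\widehat G$ from $y'$; replacing $e'$ by its constraint value in $x'(z)x'(w)\sim -e'$ gives $\psi(z)\psi(w)\sim\text{const}\,(z-w)^{-1}$, a free fermion, while $y'(z)y'(w)\sim f'$ produces $\widehat G(z)\widehat G(w)\sim\text{const}\,L(w)(z-w)^{-1}+\cdots$. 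Finally I would decouple $\psi$: since $x'(z)y'(w)$ still contributes a Cartan term, $\psi$ and $\widehat G$ do not yet commute, so one redefines $G=\widehat G+(\text{weight-}\tfrac32\text{ correction built from }\psi)$, e.g.\ an admixture of $\partial\psi$, making $\psi$ central; then $\{L,G\}$ close on the $N=1$ super-Virasoro relations at $c=\tfrac{3(1+2k+2ak)(-1+4k+4ak)}{2(-1+2k+2ak)}$ and $\psi$ generates a commuting free fermion.

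\textbf{Main obstacle.} The delicate points are twofold. First, I must justify that the naive substitution $e,e'\equiv\text{const}$ computes the cohomology exactly; this rests on the vanishing of higher Drinfeld--Sokolov cohomology at generic $k,a$, so that the reduction is concentrated in ghost degree zero and the representatives above are neither exact nor obstructed. Second, and harder, is pinning down the explicit weight-$\tfrac32$ redefinition that decouples $\psi$ from $G$ inside $X^k(a)$, together with the ghost and normal-ordering contributions to $L$ and $G$, and confirming that the surviving $N=1$ central charge is exactly the quoted rational function. Both reduce to finite computations in the reduced OPE algebra, but the fermion decoupling and the bookkeeping of ghost corrections are where the real work lies.
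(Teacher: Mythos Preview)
Your construction of the $\mathfrak{osp}(1|2)$ subalgebra in $W^k(a)$ is exactly the paper's: the same closed representatives $x'\propto G^{++}$ and $y'\propto G^{-+}$, the same use of $d_0(c)=e+1$ to replace $e$ by $-1$ in the $G^{\pm+}$ OPEs, and the same closure on the $\mathfrak{osp}(1|2)$ relations at level $-((a+1)k+1)$.

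For $X^k(a)$ your overall strategy---reduce the $\mathfrak{osp}(1|2)$ just found along its even $\mathfrak{sl}_2$---is again the paper's, and the free fermion is identified the same way, as the class $[x']$. The gap is in the supercurrent. Unlike $x'$, the field $y'$ is \emph{not} $d'_0$-closed: from $e'(z)y'(w)\sim x'(w)(z-w)^{-1}$ one gets $d'_0(y')=\,:b'x':\,\neq 0$, so $y'$ does not represent any cohomology class, and ``a weight-$\tfrac32$ field $\widehat G$ from $y'$'' has no direct meaning. Your proposed correction by a multiple of $\partial x'$ cannot repair this, since $\partial x'$ is already closed and cannot cancel a nonzero $d'_0$-image; that correction addresses only the decoupling from the free fermion, not the prior problem of producing a closed representative. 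The paper does not try to correct $y'$ at all but instead writes down a closed weight-$3/2$ element directly (which it calls $\psi$, swapping roles with your notation),
\[
\psi \ \propto\ :h'x':\,+\,2\,:e'y':\,-\,(1+2\ell)\,:e'\partial x':
\]
with $\ell$ the $\mathfrak{osp}(1|2)$ level, verifies $d'_0(\psi)=0$ and that $[\psi]$ commutes with $[x']$, and then checks the $N=1$ OPEs by direct computation, including exhibiting an explicit $d'_0$-primitive for $\psi_{(0)}\psi-2L'$. Likewise $L'$ is the $\mathfrak{osp}(1|2)$ Sugawara vector with explicit $\tfrac12\partial h'$, ghost, and $-\tfrac12:(\partial x')x':$ corrections. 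So where you defer the hard step to an unspecified ``finite computation,'' the paper's proof \emph{is} that computation.
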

\begin{proof}
We have $d_0( G^{-+})=0=d_0( G^{++})$. Moreover we have $d_0(c) =e+1$, i.e. $-e$ is in the same homology class as the vacuum $1$. Let us denote the class of $\frac{a+1}{\sqrt{2a}}G^{++}$ by $x'$ and the one of $-\frac{a+1}{\sqrt{2a}}G^{-+}$ by $y'$, then using that $-e(z)$ is in the same class as the vacuum we get the following OPEs
\begin{equation}\nonumber
\begin{split}
h'(z)x'(w) &\sim  x'(w)(z-w)^{-1}, \qquad h'(z)y'(w) \sim  -y'(w)(z-w)^{-1},\\
e'(z) y'(w) &\sim x'(w)(z-w)^{-1},\qquad f'(z) x'(w) \sim y'(w)(z-w)^{-1},\\
x'(z)y'(w) &\sim -((a+1)k+1)(z-w)^{-2} + \frac{h'(w)}{2}(z-w)^{-1}, \qquad x'(z)x'(w) \sim -e'(w)(z-w), \\  y'(z)y'(w) &\sim f'(w)(z-w)\\
\end{split}
\end{equation}
but these together with the relations of $e', h', f'$ are exactly the OPE relations of  $V_{-((a+1)k+1)}(\mathfrak{osp}(1|2))$.


For the second statement, note that $d'_0(c') = e'+1$, so $-e'$ is in the same cohomology class as the vacuum.
Since $x'_{(0)} x' = -e'$, it follows that the class $[x']$ satisfies 
$$[x'](z) [x'] (w) \sim (z-w)^{-1},$$ so it generates a free fermion algebra. Next, consider the Sugawara vector 
$$L^{\mathfrak{osp}(1|2)} = \frac{1}{2 (3 + 2 k)} :h'h':   + \frac{1}{3 + 2 k} :e' f': + \frac{1}{3 + 2 k} :f' e':    - \frac{1}{3 + 2 k} :x' y': +\frac{1}{3 + 2 k} :y' x': $$ in $V_k(\mathfrak{osp}(1|2))$. We correct it by setting
 $$L' = L^{\mathfrak{osp}(1|2)} +\frac{1}{2} \partial h'    - :b' \partial  c': - \frac{1}{2} : (\partial x') x:.$$ It is straightforward to check that
 $$d'_0(L') = 0$$ and that 
 $$L'(z) x'(w) \sim \frac{1}{2} (x' + :e' x':)(w)(z-w)^{-2} + (\partial x' + :e' \partial x')(w)(z-w)^{-1}.$$ Therefore $L'$ represents a cohomology class $[L']$ that commutes with $[x']$. Also, we compute
 $$(L'_{(0)} L') - \partial L' = -\frac{1}{2} \bigg(:e' (\partial^2 x') x': + :(\partial^2 x')x':\bigg) -\frac{1}{2} :(\partial e')(\partial x') x': - \frac{1}{24}\bigg((\partial^3 e') e': + \partial^3 e'\bigg),$$
 $$ (L'_{(1)} L') - 2 L' =   - \bigg( :e' (\partial x') x': + :(\partial x') x': \bigg) + \frac{1}{8} \bigg( : (\partial e') \partial e': - :(\partial^2 e') e': - \partial^2 e'\bigg).$$
$$ (L'_{(2)} L')  = \frac{1}{4} \bigg( :(\partial e')e': + \partial e' \bigg),$$
 $$(L'_{(3)} L')  = -\frac{3 + 10 k + 6 k^2}{3 + 2 k} 1 + \frac{1}{2} e' +\frac{1}{4} :e' e':.$$
 
 Using $-[x'] = [1]$ repeatedly, this shows that the class $[L']$ generates a Virasoro algebra of central charge $$c = -\frac{3 (1 + 2 k) (5 + 4 k)}{2 (3 + 2 k)}$$ which commutes with $[x']$.
 
Next, define the field
$$\psi = \frac{\sqrt{-1}}{\sqrt{3+2k}} \bigg( :h' x': + 2 :e' y': - (1+2k) :e' \partial x':\bigg).$$
We calculate
\begin{equation} d'_0(\psi) = 0,\end{equation}
\begin{equation} \begin{split} \psi(z) x'(w) & \sim -\frac{\sqrt{-1}(1+2k)}{\sqrt{3+2k}} (e' + :e' e':)(w)(z-w)^{-2}  \\ & - \frac{\sqrt{-1}}{2\sqrt{3+2k}}  \bigg((2+4k) :(\partial e') e': + (5+4k) \partial e'\bigg)(w)(z-w)^{-1}.\end{split} \end{equation}
This shows that the class $[\psi]$ is well-defined and commutes with $[x']$.

Next, we compute
\begin{equation} \begin{split} & \psi_{(2)} \psi = \frac{3 + 6 k}{3 + 2 k}  e' -\frac{4 (1 + 2 k)^2}{3 + 2 k} :e' e': -\frac{2(1 + 2 k)^2}{3 + 2 k} :e' e' e':,\\
& \psi_{(1)} \psi = \frac{3 + 6 k}{6 + 6 k}  \partial e' -\frac{4(1 + 2 k)^2}{3 + 2 k} :(\partial e') e': -\frac{3(1 + 2 k)^2}{3 + 2 k} :(\partial e') e' e':,\\
& \psi_{(0)} \psi = 2L - d'_0(R),\end{split}\end{equation} where 
\begin{equation} \begin{split} R & = \frac{1}{3 + 2 k} :h' h' c':  + \frac{4}{3 + 2 k} :e' f' c:   +\frac{2(1 + k)}{3 + 2 k} : (\partial h') c':  - \frac{4}{3 + 2 k} :x' y' c':   - 2: b' (\partial c') c':  \\ & -   :h' \partial c':  
 - \frac{(3 + 4 k)^2}{2 (3 + 2 k)} \partial^2 c' - \frac{3 (1 + 2 k)}{2(3 + 2 k)} :(\partial e') \partial c': + \frac{(1 + 2 k) (7 + 8 k)}{2(3 + 2 k)}:( \partial^2 e') c': \\& + \frac{(1 + 2 k)^2}{3 + 2 k} :(\partial e') e'( \partial c'):  + \frac{(1 + 2 k)^2}{3 + 2 k} :e' e' \partial^2 c': \end{split}\end{equation} It follows that in cohomology, we have 
 $$[\psi](z) [\psi](w) \sim -\frac{(1 + 2 k) (5 + 4 k)}{3 + 2 k} (z-w)^{-3} + 2[L'](w)(z-w)^{-1}.$$ It is also not difficult to check that
$$[L'](z) [\psi](w) \sim \frac{3}{2} [\psi](w) (z-w)^{-2} + \partial [\psi](w)(z-w)^{-1}.$$ Therefore $[L']$ and $[\psi]$ generate a copy of the $N=1$ algebra with central charge $c = -\frac{3 (1 + 2 k) (5 + 4 k)}{2 (3 + 2 k)}$, which commutes with $[x']$.

\end{proof}

\bibliographystyle{alpha}

\end{document}